\newcommand{\treee}[1]{\tikz[baseline=0.3ex]{
		\draw   (0, 0) node (a)  {$\bullet$}
		(0,-1) node (b)  {$#1$};
		\draw (b) -- (a);}}
 \newcommand{\treeC}[1]{\tikz[baseline=0.1ex]{
		\draw   (  0, 0) node (a)  {$\bullet$}
		(-.7,-1) node (b)  {$#1$}
		( .7,-1) node (c)  {$#1$};
		\draw (b) -- (a) -- (c);}}
\newtheorem{theorem}{Theorem}
\newtheorem{lemma}{Lemma}
\newtheorem{corollary}{Corollary}
\newtheorem*{theorem*}{Theorem}
\newtheorem{theo}{Theorem}[section]
\newtheorem{coro}[theo]{Corollary}
\newtheorem{propo}[theo]{Proposition}
\newtheorem{theor}[theo]{Theorem}
\newtheorem{lem}[theo]{Lemma}
\theoremstyle{definition}
\newtheorem{remark}[theo]{Remark}
\numberwithin{equation}{section}
\def\finremark{\hfill$\clubsuit$}
\def\C{\mathbb{C}}
\def\D{\mathbb{D}}
\def\E{\mathbf{E}}
\def\V{\mathbf{V}}
\def\P{\mathbf{P}}
\def\E{\mathbf{E}}
\def\K{\mathcal{K }}
\title{The distance to the border of a random tree}
\author[V.\,J. Maci\'a]{V\'{\i}ctor J. Maci\'a}
\address[V\'{\i}ctor J. Maci\'a]{Departamento de Matem\'aticas, CUNEF Universidad, Spain.}
\email{victor.macia@cunef.edu}
\thanks{Research of V. Maci\'a was partially funded by grant MTM2017-85934-C3-2-P2 of Ministerio de Econom\'{\i}a y Competitividad of Spain and European Research Council Advanced Grant 834728}
\subjclass[2020]{{ 05C05, 60J80, 32H50, 05A16}}
\keywords{Asymptotic formulae, rooted trees, distance to the leaves, distance to the border, Galton-Watson process, iteration, Branching processes, large powers, Lagrange's inversion, coefficients, analytic functions, generating function, probability generating function.}
\begin{document}
\raggedbottom

	\begin{abstract}
We consider a Galton–Watson process conditioned to have total progeny equal to \(n\) and investigate the asymptotic probability that the process exhibits a distance to the border (i.e., the minimal distance from the root to a leaf) of at least \(k\) as \(n\to\infty\). This work resolves an open question posed in \cite{Ara-Fer}. While similar asymptotic results appear in the literature on \(k\)-protected nodes (see \cite{Protection1-mean} and \cite{Protection-Clemens}), our approach is distinct, relying on Tauberian theorems and asymptotic results for large powers that have been established using Khinchin families. This methodology allows us to analyze the asymptotic behavior of the generating functions in a direct way, contrasting with the singularity analysis or combinatorial techniques typically used in the context of \(k\)-protected nodes.

	\end{abstract}

\maketitle
	\parskip=1.5mm

\setcounter{tocdepth}{3}

\normalsize

\normalsize

\section{Introduction}\label{sec: Intro}

For a rooted tree $T$ the distance to the border $\partial(T)$ is the minimum of the distances from the root to the leaves of $T$. The height, on the other hand, is the maximum of these distances.

In this paper, we are interested in obtaining asymptotic formulas, as the size $n$ tends to infinity, for the probability that a random rooted tree, conditioned to have size $n$, has distance to the border $\partial \geq k$.

More precisely, for a Galton-Watson process $T$ with offspring distribution given by the random variable $Y$, and conditioned to have total progeny $\#(T) = n$, we ask for the probability that this conditioned Galton-Watson process has distance to the border bigger or equal than $k \geq 0$:
$$(\star) \quad \P(\partial(T) \geq k \, | \, \#(T) = n).$$

This question but for the height is a classical one that has been dealt with by R\'enyi and Szekeres for Cayley trees, \cite{Renyi}, de Bruijn, Knuth, and Rice for plane trees, \cite{de Bruijn}, and Flajolet, Gao, Odlyzko, and Richmond for binary trees, \cite{FlajoletOdlyzko}.

Our interest in the distance to the border of a rooted tree comes from \cite{Ara-Fer}, in that paper just the case of Cayley trees and $k = 2,3$ are dealt with, and the general question is proposed, see \cite[p. 312]{Ara-Fer}. 

The main result of this paper is Theorem \ref{thm: mainthmprobabilistic}. There an asymptotic formula for the probability $(\star)$, which depends on some computable constants, is presented. 

The proof of Theorem \ref{thm: mainthmprobabilistic} is based on an iteration scheme involving Lagrange's equation. An asymptotic formula for the coefficients of the $k$-th iterate of that scheme is the content of Theorem \ref{thm:mainthmlagrange}.

In the combinatorial setting several authors have studied the distance to the border in the context of the so called $k$-protected nodes. In \cite{Protection1-mean} the authors calculate this same asymptotic for the probability $(\star)$, in the context of simply generated families of trees, and also the asymptotic for the mean distance to the border using singularity analysis. In \cite{Protection-Bona}, \cite{Protection-Gisang} and  \cite{Protection-Clemens} the authors calculate some parameters related with the distance to the border of a tree: \cite{Protection-Bona} gives an asymptotic formula for the probability that  a randomly selected node in a binary search tree of size $n$ its at distance to the border bigger or equal than $k$, as the size $n$ tends to infinity. The papers
\cite{Protection-Gisang} and \cite{Protection-Clemens} study the mean distance to the border for certain classes of rooted plane trees, as the size $n$ tends to infinity.

Our method is based on an iterative scheme that leverages Lagrange’s inversion formula and the Theory of Khinchin families. Using this method we can extend the main result of this paper, Theorem \ref{thm: mainthmprobabilistic}, to some limit cases, see Remark \ref{remark: extend} below. This framework connects the Theory of Khinchin families with the theory of simply generated random trees. By linking combinatorial constructions with probabilistic tools, our method opens up new avenues for analyzing local structural parameters in random trees. We place significant effort into explaining this connection in depth—even revisiting well-known constructions—to ensure that the paper is fully self-contained and accessible to readers from both combinatorial and probabilistic backgrounds.

Although Khinchin families appear implicitly in many papers on random trees, to the best of our knowledge, no one has previously observed the connection between Lagrange inversion and the reparametrization of these families. Through this connection, we derive classical results about random trees in a direct and elegant manner. Moreover, we obtain an explicit formula for the probability of extinction expressed as a series related to the solution of Lagrange’s equation. The connection with the theory of Khinchin families presented here and in \cite{K_dos} allows many classical results to be proved in a simple way. For further details on Khinchin families and their applications in combinatorics and probability, see \cite{K_exponentials}, \cite{CFFM3} and \cite{K_dos}.

\subsection{Some notations}

We denote $\D(0,R) \subset \C$ the open disk with center $z = 0$ and radius $R > 0$ and $\C$ the complex plane. $\D$ denotes the unit disk in the complex plane $\C$.

For a power series $A(z)=\sum_{n=0}^\infty a_n z^n$, we denote its $n$th coefficient by
$$a_n=\textsc{coeff}_{[n]}(A(z))\, , \quad \mbox{for each $n \ge 0$}\, .$$

For a random variable $X$ we denote $\E(X)$ and $\V(X)$ the expectation and variance of $X$, respectively.

Given two random variables $X$ and $Y$ we write $X\stackrel{d}{=}Y$ to signify equality in distribution.

For integers $q \geq 0$ and $m \in \{0,1,2,\dots,q-1\}$ we denote 
$$\mathbb{N}_{m,q} = \{n \in \mathbb{N} : n \equiv m, \mod q \}.$$

Given two sequences $a_n$ and $b_n$, with $b_n \not\equiv 0$, we write
\begin{align*}
	a_n \sim b_n, \quad \text{ as } n \rightarrow \infty \text{ and } n \in \mathbb{N}_{m,q}, 
\end{align*}
to mean that
\begin{align*}
	\lim_{\substack{n \to \infty; \\ n\in \mathbb{N}_{m,q}}}\frac{a_n}{b_n} = 1.
\end{align*}

\subsection*{Acknowledgements} The author would like to thank Prof.~Jos\'e L. Fern\'andez for bringing this problem to his attention and also for some enlightening discussions.

\section{Power series with positive coefficients}

\subsection{Khinchin families} The class $\K$ consists of the nonconstant power series $f(z) = \sum_{n \geq 0}a_n z^n$ with nonnegative coefficients and positive radius of convergence $R > 0$, and such that $f(0) =a_0> 0$.

To each power series $f$ in $\K$ we associate a one-parameter family of discrete random variables $(X_t)_{t \in [0,R)}$ taking values in $\{0,1, \ldots\}$. This family of discrete random variables is called the Khinchin family of $f$ and is given by
\begin{align*}
	\P(X_t = n) = \frac{a_n t^n}{f(t)}, \quad \text{ for all } n \geq 0 \text{ and } t \in (0,R).
\end{align*}
For $t=0$, we define $X_0 \equiv 0$. For each $t \in (0,R)$, the random variable $X_t$ is not a constant.

For  each $t \in [0,R)$, we denote
$\E(X_t) = m(t)$ and $\V(X_t) = \sigma^2(t)
$.
It turns out that
\begin{align*}
	m(t) = \frac{tf'(t)}{f(t)}, \quad \text{ and } \quad   \sigma^2(t) = tm'(t).
\end{align*}

The mean function $m(t)$ is an increasing diffeomorphism from $[0,R)$ onto its image; this follows from the fact that $\sigma^2(t) > 0$, for all $t \in (0,R)$.

Those $f\in \K$ such that  \begin{equation}\label{eq:req for existence of tau}\lim_{t \uparrow R} m(t)>1\end{equation} comprise a subclass $\K^\star$ that is quite relevant in what follows. If $f \in \K^\star$, the unique value $\tau \in(0, R)$  such that $m(\tau)=1$, is called the \textit{apex} of $f$. This apex $\tau$ is characterized by $\tau f^\prime(\tau)=f(\tau)$. Observe that the existence of apex for $f$ precludes $f$ from being a polynomial of degree 1, and, in particular, implies that $f^{\prime\prime}(\tau)>0$.

We refer to \cite{K_uno}, \cite{K_exponentials} and \cite{K_dos}, for a detailed exposition of the topic.

\medskip

To power series $g(z)$ of the form $g(z)=z^N f(z)=\sum_{n=N}^\infty b_n z^n$ with $f\in \K$ (and radius of convergence $R$) and integer $N \ge 1$ we also associate a family of random variables $(Y_t)_{t \in [0,R)}$. These $Y_t$ are given by
$$\P(Y_t=n)=\frac{b_n t^n}{g(t)}\, , \quad \mbox{for $t\in (0,R)$ and $n \ge N$}\,,$$
and $Y_0\equiv N$. Observe that the $Y_t$ take values in $\{N, N{+}1, \ldots\}$. If $(X_t)_{t \in [0,R)}$ is the Khinchin family of $f$ then
$$Y_t\overset{d}{=}X_t+N\,, \quad \mbox{for any $t\in[0,R)$}\, .$$
We refer to the family $(Y_t)_{t \in [0,R)}$ as a \textit{shifted} Khinchin family.

\medskip

For any $f(z) = \sum_{n = 0}^{\infty}a_n z^n$ in $\K$, we denote
\begin{align*}
	Q_{f} =\gcd\{n > 0 : a_n > 0\} = \lim_{N \rightarrow \infty} \gcd\left(\{0 < n \leq N: a_n \neq 0\}\right);
\end{align*}
observe that there is an \textit{auxiliary power series}  $h \in \K$ such that $f(z) = h(z^{Q_f})$.

\subsection{Meir-Moon Tauberian theorem.}

The following Tauberian theorem, due to Meir-Moon, see \cite{MeirMoon}, will be used later in the proof of  Theorem \ref{thm:mainthmlagrange}.

\begin{theorem}[Meir-Moon]\label{tauberian}
	Let $B,C$ be power series in $\K$ with Taylor expansions
	\begin{align*}
		B(z) = \sum_{n = 0}^{\infty}b_nz^n, \quad C(z) = \sum_{n=0}^{\infty}c_nz^n,
	\end{align*}
	and let
	\begin{align*}
		D(z) \triangleq B(z)C(z) = \sum_{n=0}^{\infty}d_nz^n.
	\end{align*}
	Assume that the coefficients of $B$ and $C$ satisfy
	\\
	\begin{align*}
		b_n = O(n^{-\beta} r^{-n}) \quad \text{ and } \quad c_n \sim Cn^{-\alpha} r^{-n},\quad \mbox{as $n \rightarrow \infty$},
	\end{align*}
	\\
	where $C,r,\alpha$ and $\beta$ are positive constants. If $\alpha <1 <\beta$ and $B(r) \neq 0$, then
	
	\begin{align*}
		d_n \sim B(r)c_n, \quad \text{ as } n \rightarrow \infty.
	\end{align*}
\end{theorem}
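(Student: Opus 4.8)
The plan is to reduce the statement to a purely sequential fact about a convolution of a summable sequence with a regularly varying one. I would set $\tilde b_n = b_n r^n$, $\tilde c_n = c_n r^n$ and $\tilde d_n = d_n r^n$; since $D = BC$, the coefficients obey the convolution identity $\tilde d_n = \sum_{j=0}^n \tilde b_j \tilde c_{n-j}$. The hypothesis $b_n = O(n^{-\beta} r^{-n})$ with $\beta > 1$ makes $\sum_n \tilde b_n$ absolutely convergent with sum exactly $B(r)$ (so $B(r)$ is a genuine finite number, nonzero by assumption), while the hypothesis on $c_n$ reads $\tilde c_n \sim C n^{-\alpha}$ with $0 < \alpha < 1$. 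In particular each $\tilde c_m$ is bounded and $\tilde c_n \to 0$. It then suffices to prove $\tilde d_n/\tilde c_n \to B(r)$, that is,
$$\sum_{j=0}^n \tilde b_j \, \frac{\tilde c_{n-j}}{\tilde c_n} \longrightarrow \sum_{j=0}^\infty \tilde b_j .$$

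First I would fix a large integer $J$ and split the sum over $j$ into three blocks: the \emph{head} $0 \le j \le J$, the \emph{middle} $J < j \le n/2$, and the \emph{near-diagonal tail} $n/2 < j \le n$. In the head, each summand converges, since for every fixed $j$ the relation $\tilde c_n \sim C n^{-\alpha}$ gives $\tilde c_{n-j}/\tilde c_n \to 1$, whence $\sum_{j \le J} \tilde b_j \,\tilde c_{n-j}/\tilde c_n \to \sum_{j \le J} \tilde b_j$. In the middle block one has $n-j \ge n/2 \to \infty$, so the ratio $\tilde c_{n-j}/\tilde c_n = (1-j/n)^{-\alpha}(1+o(1))$ is bounded by a constant $K$ depending only on $\alpha$, uniformly in $J < j \le n/2$; the entire block is therefore at most $K \sum_{j > J} \tilde b_j$, which is as small as we wish once $J$ is large, by summability of $\tilde b$.

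The main obstacle is the near-diagonal tail $n/2 < j \le n$, where $\tilde b_j$ is tiny but $\tilde c_{n-j}$ need not be: here $n-j$ runs over $0 \le n-j < n/2$, so $\tilde c_{n-j}$ can be as large as the bounded quantity $\sup_m \tilde c_m$. This is exactly where both exponent conditions must be used together. Uniformly on this range $\tilde b_j = O(j^{-\beta}) = O(n^{-\beta})$, so
$$\sum_{n/2 < j \le n} \tilde b_j \,\tilde c_{n-j} = O\!\Big( n^{-\beta} \sum_{m=0}^{n/2} \tilde c_m \Big) = O\big( n^{-\beta}\cdot n^{1-\alpha} \big),$$
where $\sum_{m \le n/2}\tilde c_m = O(n^{1-\alpha})$ uses precisely $\alpha < 1$ (the partial sums of $C m^{-\alpha}$ grow like $n^{1-\alpha}$). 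Dividing by $\tilde c_n \sim C n^{-\alpha}$ leaves a contribution of order $n^{1-\beta}$, which tends to $0$ because $\beta > 1$. Assembling the three blocks and passing to $\liminf$ and $\limsup$ as $n \to \infty$ — all terms are nonnegative, so the head already gives $\liminf \ge \sum_{j\le J}\tilde b_j$ — then letting $J \to \infty$ (so that $\sum_{j>J}\tilde b_j \to 0$ while $\sum_{j\le J}\tilde b_j \to B(r)$) forces both limits to equal $B(r)$, which is the assertion $d_n \sim B(r) c_n$. The delicate point throughout is that the two hypotheses play complementary roles: $\beta > 1$ both makes $B(r)$ finite and kills the near-diagonal block, whereas $\alpha < 1$ guarantees that $c_n$ is the heavy, dominant factor whose asymptotics the product inherits. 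The whole argument is in essence a careful, quantitative dominated-convergence estimate for the convolution.
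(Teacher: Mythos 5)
Your argument is correct, and it is worth noting at the outset that the paper does not prove this statement at all: it is quoted as a theorem of Meir and Moon with a citation to the literature, so there is no in-paper proof to compare against. Your self-contained proof is sound. The reduction to the normalized convolution $\tilde d_n=\sum_{j=0}^n\tilde b_j\tilde c_{n-j}$ with $\tilde b_n=b_nr^n=O(n^{-\beta})$ summable to $B(r)$ and $\tilde c_n=c_nr^n\sim Cn^{-\alpha}$ is the right normalization, and the three-block split does exactly what it must: the head $j\le J$ converges termwise because $\tilde c_{n-j}/\tilde c_n\to 1$ for fixed $j$; the middle block $J<j\le n/2$ is uniformly dominated (the ratio $\tilde c_{n-j}/\tilde c_n$ is bounded by a constant since $n-j\ge n/2$) and is made small by the tail of the convergent series $\sum\tilde b_j$; and the near-diagonal block is where both exponent hypotheses genuinely interact, giving $O(n^{-\beta}\cdot n^{1-\alpha})/\tilde c_n=O(n^{1-\beta})\to 0$. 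The nonnegativity of the coefficients (guaranteed by $B,C\in\K$) is what lets you drop blocks when taking the $\liminf$, and you use it correctly. This is essentially the classical proof of such transfer (Schur-type) lemmas, adapted to the regime where $C$ has radius of convergence exactly $r$; I see no gap.
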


The Meir-Moon Theorem above is a companion of a result from Schur (and Szász), see \cite[p. 39]{PolyaSzego} and also \cite[Theorem VI.12, p. 434]{Flajolet}, which requires $B$ to have radius of convergence strictly bigger than $r > 0$ and also that $\lim_{n \rightarrow \infty}c_n/c_{n-1} = r$.

For similar results of this kind see \cite{MeirMoonOld}, \cite{MeirMoonChar} and \cite{MeirMoon}.

\begin{corollary}\label{tauberian-corollary-Q} Let $Q \geq 1$ be a positive integer. Let $B$ and $C$ be two power series in $\K$ with Taylor expansions
	\begin{align*}
		B(z) = \sum_{j = 0}^{\infty}b_{jQ}z^{jQ}, \quad C(z) = \sum_{j=0}^{\infty}c_{jQ}z^{jQ},
	\end{align*}
	and let
	\begin{align*}
		D(z) \triangleq B(z)C(z) = \sum_{j=0}^{\infty}d_{jQ}z^{jQ}.
	\end{align*}
	Assume that the coefficients of $B$ and $C$ satisfy
	\\
	\begin{align*}
		b_n = O(n^{-\beta} r^{-n}) \quad \text{ and } \quad c_n \sim Cn^{-\alpha} r^{-n},\quad \mbox{as $n \rightarrow \infty$ and $n \in \mathbb{N}_{0,Q}$},
	\end{align*}
	\\
	where $C,r,\alpha$ and $\beta$ are positive constants. If $\alpha <1 <\beta$ and $B(r) \neq 0$, then 
	
	\begin{align*}
		d_n \sim B(r)c_n, \quad \text{ as } n \rightarrow \infty \text{ and } n \in \mathbb{N}_{0,Q}
	\end{align*}
\end{corollary}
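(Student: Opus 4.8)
The plan is to reduce the statement to the already-established Meir--Moon theorem (Theorem \ref{tauberian}) by the substitution $w = z^Q$, which collapses the arithmetic-progression bookkeeping along $\mathbb{N}_{0,Q}$ into the ordinary indexing. Since by hypothesis $B$ and $C$ are supported on the multiples of $Q$, I would first write $B(z) = \widetilde{B}(z^Q)$ and $C(z) = \widetilde{C}(z^Q)$, where
\[
\widetilde{B}(w) = \sum_{j=0}^\infty b_{jQ}\,w^j, \qquad \widetilde{C}(w) = \sum_{j=0}^\infty c_{jQ}\,w^j .
\]
I would then check that $\widetilde{B},\widetilde{C}\in\mathcal{K}$: they have nonnegative coefficients, their constant terms $b_0,c_0$ are positive because $B,C\in\mathcal{K}$, they are nonconstant because $B,C$ are (any nonzero non-constant coefficient of $B$ necessarily sits at a multiple of $Q$), and if $R$ denotes the radius of convergence of $B$ (resp.\ $C$), then $\widetilde{B}$ (resp.\ $\widetilde{C}$) has radius of convergence $R^Q>0$. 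Writing $\widetilde{D}(w)=\widetilde{B}(w)\widetilde{C}(w)$, we have $D(z)=\widetilde{D}(z^Q)$ and hence $d_{jQ}=\textsc{coeff}_{[j]}(\widetilde{D})$ for every $j\ge 0$.

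Next I would transfer the coefficient hypotheses through the substitution. Setting $\widetilde{r}=r^Q$ and evaluating the given asymptotics along $n=jQ$, and using $(jQ)^{-\beta}=Q^{-\beta}j^{-\beta}$, $(jQ)^{-\alpha}=Q^{-\alpha}j^{-\alpha}$ and $r^{-jQ}=\widetilde{r}^{\,-j}$, one gets
\[
\textsc{coeff}_{[j]}(\widetilde{B}) = b_{jQ} = O\!\big(j^{-\beta}\,\widetilde{r}^{\,-j}\big), \qquad \textsc{coeff}_{[j]}(\widetilde{C}) = c_{jQ} \sim \widetilde{C}_0\, j^{-\alpha}\,\widetilde{r}^{\,-j},
\]
as $j\to\infty$, where $\widetilde{C}_0 = C\,Q^{-\alpha}>0$. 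The exponents $\alpha,\beta$ are unchanged, so the condition $\alpha<1<\beta$ is preserved, and the nonvanishing hypothesis transfers verbatim since $\widetilde{B}(\widetilde{r})=\widetilde{B}(r^Q)=B(r)\neq 0$. Thus $\widetilde{B},\widetilde{C}$ meet every assumption of Theorem \ref{tauberian} at radius $\widetilde{r}$.

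Then I would invoke Theorem \ref{tauberian} for the pair $\widetilde{B},\widetilde{C}$, obtaining
\[
\textsc{coeff}_{[j]}(\widetilde{D}) \sim \widetilde{B}(\widetilde{r})\,\textsc{coeff}_{[j]}(\widetilde{C}) = B(r)\,c_{jQ}, \qquad \text{as } j\to\infty,
\]
and translate back via $d_{jQ}=\textsc{coeff}_{[j]}(\widetilde{D})$ to conclude $d_{jQ}\sim B(r)\,c_{jQ}$, that is $d_n\sim B(r)\,c_n$ as $n\to\infty$ with $n\in\mathbb{N}_{0,Q}$, which is the claim. Since this is a clean change of variables, there is no genuine obstacle; the only point requiring care is the faithful translation of the constant $C$ and of the conditions $\alpha<1<\beta$ and $B(r)\neq 0$ under $w=z^Q$, which the computation above makes explicit.
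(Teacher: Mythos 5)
Your proof is correct and is precisely the intended argument: the paper states Corollary~\ref{tauberian-corollary-Q} without writing out a proof, leaving implicit exactly the reduction you carry out, namely the substitution $w=z^{Q}$ producing auxiliary series $\widetilde{B},\widetilde{C}\in\K$ to which Theorem~\ref{tauberian} applies at radius $\widetilde{r}=r^{Q}$. All the bookkeeping (transfer of the exponents $\alpha,\beta$, of the constant, and of the nonvanishing condition $\widetilde{B}(r^{Q})=B(r)\neq 0$) is handled correctly.
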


\section{Lagrange's equation}\label{sec:Lagrange}

Let $\psi$ be a power series in $\K^\star$ with radius of convergence $R_\psi > 0$. Let $g$ the  power series which is the (unique) solution of Lagrange's equation with data $\psi$, i.e. satisfying:
$$g(z)=z \psi(g(z)).$$

\subsection{Coefficients} The coefficients of the solution power series $g(z)=\sum_{n=0}^\infty A_n z^n$ are given in terms of the data $\psi$ by Lagrange's inversion formula:
$$A_n=\frac{1}{n} \textsc{coeff}_{[n{-}1]} \big(\psi(z)^n\big), \quad \mbox{for $n \ge 1$},$$
and $A_0=0$.

Besides the exact formula above for the $A_n$ in terms of the data $\psi$, there is the following asymptotic formula due to Otter, \cite{Otter} and to Meir-Moon, \cite{MeirMoonOld}. Recall that for any integer $m \in \{0,1,2,\dots,Q_{\psi}-1\}$ we denote $\mathbb{N}_{m,Q_\psi}=\{n \in \mathbb{N}: n \equiv m \, \textrm{ mod} \, Q_\psi\}$. Using this notation:

\begin{itemize}\item for indices $n \notin \mathbb{N}_{1,Q_{\psi}}$, we have that $A_n=0$,
	\item for indices $n \in \mathbb{N}_{1,Q_\psi}$, we have that
	\begin{equation}\label{eq:formula OtterMeinMoon}A_n \sim \frac{Q_\psi}{\sqrt{2\pi}} \frac{\tau}{\sigma_\psi(\tau)}\frac{1}{n^{3/2}} \Big(\frac{\psi(\tau)}{\tau}\Big)^n\, , \quad \mbox{as $n \to \infty$ and $n \in \mathbb{N}_{1,Q_{\psi}}$}\,.\end{equation}
\end{itemize}

The radius of convergence of the power series solution $g$ is denoted $\rho$. The asymptotic formula for the $A_n$ implies that $\rho=\tau/\psi(\tau)$ and, also,  that $g$ extends to be continuous in the closed disk $\{|z|\le \rho\}$, since the sequence $(A_n \rho^n)_{n \ge 1}$ is summable.

\subsection{Function $t/\psi(t)$}

Lagrange's equation gives that $g$ and $z/\psi(z)$ are inverse of each other and thus that
$$g(z/\psi(z))=z\, , \quad \mbox{for $|z|\le \tau$}\,.$$
In particular,
$$g(t/\psi(t))=t\, , \quad \mbox{for $t \in[0, \tau]$}\,.$$
The image of the interval $[0,\rho]$ by $g$ is the interval $g([0,\tau/\psi(\tau)]) = [0,\tau]$.

For the function $t \mapsto t/\psi(t)$, we have
\begin{lem}\label{lemma: monotony_t/psi(t)} For  $\psi \in \K^\star$ with apex $\tau \in (0,R_\psi)$, the function $t/\psi(t)$
	\begin{enumerate}
		\item is strictly increasing on the interval $[0,\tau)$
		\item is strictly decreasing on the interval $(\tau,R_{\psi})$
		\item has a maximum at $t = \tau$.
	\end{enumerate}
\end{lem}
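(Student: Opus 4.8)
The plan is to reduce the whole statement to the sign of a single derivative and then invoke the strict monotonicity of the mean function $m(t)$ already recorded in the discussion of Khinchin families. Write $\phi(t) = t/\psi(t)$. Since $\psi \in \K$ has nonnegative coefficients and $\psi(0) = a_0 > 0$, we have $\psi(t) > 0$ for every $t \in [0, R_\psi)$, so $\phi$ is differentiable there and
$$\phi'(t) = \frac{\psi(t) - t\psi'(t)}{\psi(t)^2}.$$
The first step is to observe that the numerator factors through the mean function. Recalling that $m(t) = t\psi'(t)/\psi(t)$, we can write $\psi(t) - t\psi'(t) = \psi(t)\,\bigl(1 - m(t)\bigr)$, so that
$$\phi'(t) = \frac{1 - m(t)}{\psi(t)}.$$
Because $\psi(t) > 0$ on $[0, R_\psi)$, the sign of $\phi'(t)$ coincides with the sign of $1 - m(t)$.

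Next I would use the two properties of $m$ that are already available in the excerpt: the mean function $m$ is strictly increasing on $[0, R_\psi)$, since $\sigma^2(t) = t\,m'(t) > 0$ for $t \in (0, R_\psi)$, and $m(\tau) = 1$ by the very definition of the apex. Combining these, for $t \in [0, \tau)$ we get $m(t) < m(\tau) = 1$, hence $\phi'(t) > 0$ and $\phi$ is strictly increasing, which is (1); for $t \in (\tau, R_\psi)$ we get $m(t) > 1$, hence $\phi'(t) < 0$ and $\phi$ is strictly decreasing, which is (2). Part (3) is then immediate, since $\phi$ increases on $[0,\tau)$ and decreases on $(\tau, R_\psi)$, so $t = \tau$ is the (unique) maximum of $t/\psi(t)$.

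This argument has no genuine obstacle; the only point requiring care is the algebraic identity rewriting the numerator $\psi - t\psi'$ as $\psi\,(1 - m)$, which is exactly what transfers the known behaviour of $m$ to that of $\phi$. The strictness asserted in (1) and (2) is inherited from the strict positivity $\sigma^2(t) > 0$ on the \emph{open} interval $(0, R_\psi)$, which is what makes $m$ strictly, rather than merely weakly, increasing.
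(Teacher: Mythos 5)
Your proof is correct and follows essentially the same route as the paper: the paper's proof consists precisely of the identity $\left(t/\psi(t)\right)' = \frac{1}{\psi(t)}\bigl(1-m_{\psi}(t)\bigr)$, and you derive that identity and then spell out the conclusion via the strict monotonicity of $m$ and $m(\tau)=1$, which the paper leaves implicit.
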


\begin{proof}
	The result follows from the identity
	\begin{align*}
		\left(\frac{t}{\psi(t)}\right)^{\prime} = \frac{1}{\psi(t)}(1-m_{\psi}(t)), \quad \text{ for all } t \in [0,R_{\psi}).
	\end{align*}
\end{proof}

\subsection{Functions of the solution $g$ of Lagrange's equation}

For a power series $H$ with radius of convergence $R > \tau$ the coefficients of the power series $H(g)$ are given by
\begin{align}\label{eq:formula_H(g)}
	\textsc{coeff}_{[n]}\left(H(g(z))\right) = \frac{1}{n}\textsc{coeff}_{[n-1]}\left(H^{\prime}(z)\psi(z)^n\right), \text{ for } n \geq 1,
\end{align}
and $\textsc{coeff}_{[0]}\left(H(g(z))\right) = H(0)$.

For the coefficients of functions $H(g)$ of the solution $g$ we have the following asymptotic formula, see \cite[p. 268]{MeirMoonOld}.
\begin{lemma}
	\label{lemma:powers-Lagrange} Let $\psi \in \K^\star$ with apex $\tau$ and $Q_{\psi} = 1$. Let $g$ be the solution of Lagrange's equation with data $\psi$. Then, for any nonconstant power series  $H$, with nonnegative coefficients and having radius of convergence $R > \tau$, the following asymptotic formula holds
	\begin{align*}
		\textsc{coeff}_{[n]}\left[H(g(z))\right] \sim H^{\prime}(\tau)A_n, \quad \text{ as } n \rightarrow \infty.
	\end{align*}
\end{lemma}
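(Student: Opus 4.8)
The plan is to start from the exact coefficient formula \eqref{eq:formula_H(g)}, which expresses $\textsc{coeff}_{[n]}(H(g(z)))$ through the $n$th power of the data $\psi$, and to compare it term by term with the Lagrange inversion formula $A_n=\tfrac1n\,\textsc{coeff}_{[n-1]}(\psi(z)^n)$ for the coefficients of $g$ itself. Writing $H'(z)=\sum_{j\ge 0}h'_j z^j$ (so that $h'_j\ge 0$ and $\sum_j h'_j\tau^j=H'(\tau)$, a finite positive number because $H$ is nonconstant with nonnegative coefficients and $R>\tau$), the assertion reduces to proving
$$\frac{\textsc{coeff}_{[n-1]}(H'(z)\psi(z)^n)}{\textsc{coeff}_{[n-1]}(\psi(z)^n)}\longrightarrow H'(\tau),\quad\text{as }n\to\infty,$$
since the common factor $1/n$ cancels in the ratio $\textsc{coeff}_{[n]}(H(g))/A_n$.

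Next I would pass to the Khinchin family of $\psi$ evaluated at the apex. Let $S_n=X_\tau^{(1)}+\dots+X_\tau^{(n)}$ be the sum of $n$ independent copies of $X_\tau$; then $\textsc{coeff}_{[m]}(\psi(z)^n)=\psi(\tau)^n\,\tau^{-m}\,\P(S_n=m)$, and expanding the product in the numerator gives
$$\frac{\textsc{coeff}_{[n-1]}(H'(z)\psi(z)^n)}{\textsc{coeff}_{[n-1]}(\psi(z)^n)}=\frac{\sum_{j=0}^{n-1}h'_j\,\tau^{j}\,\P(S_n=n-1-j)}{\P(S_n=n-1)}.$$
Because $m_\psi(\tau)=1$ we have $\E(S_n)=n$, so for each fixed $j$ the index $n-1-j$ lies within $O(1)$ of the mean. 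Since $Q_\psi=1$, the variable $X_\tau$ is a non-degenerate, aperiodic (span $1$) lattice variable of finite variance $\sigma_\psi^2(\tau)>0$, and the local central limit theorem yields, for every fixed $j$,
$$\sqrt{2\pi n}\,\sigma_\psi(\tau)\,\P(S_n=n-1-j)\longrightarrow 1,\quad\text{as }n\to\infty.$$
Applied termwise to numerator and denominator, this turns the ratio into $\sum_j h'_j\tau^j=H'(\tau)$, as required.

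The step that needs genuine care, and which I expect to be the main obstacle, is the interchange of the limit $n\to\infty$ with the (effectively growing) sum over $j$ in the numerator. For this I would combine the pointwise local limit behaviour above with the uniform concentration bound $\sup_m\P(S_n=m)=O(n^{-1/2})$, valid for a non-degenerate lattice variable of finite variance, together with the geometric decay of the weights: since $H'$ has the same radius of convergence $R>\tau$ as $H$, one has $h'_j\tau^j=O\big((\tau/R')^j\big)$ for any $\tau<R'<R$, so $\sum_j h'_j\tau^j$ provides a summable majorant independent of $n$ (with the convention that the $j$th term vanishes once $j\ge n$). A standard dominated-convergence argument then legitimizes passing the normalizing factor $\sqrt{2\pi n}\,\sigma_\psi(\tau)$ inside the sum and taking the termwise limit, which yields $H'(\tau)$ in the numerator and $1$ in the denominator and completes the proof.
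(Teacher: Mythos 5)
Your argument is correct, and it is essentially the route the paper itself relies on: the paper does not prove this lemma in-line but cites Meir--Moon, and its proof of the generalization (Lemma \ref{lemma:H_saltos}) follows exactly your scheme --- reduce via \eqref{eq:formula_H(g)} to the coefficients of $H'(z)\psi(z)^n$ and then invoke a ``large powers'' estimate, which is precisely the local central limit theorem for the Khinchin family at the apex that you spell out. Your explicit treatment of the dominated-convergence step (uniform concentration bound plus the summable majorant $h'_j\tau^j$) supplies the detail the cited large-powers result encapsulates, so the proposal is a sound, self-contained version of the same proof.
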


Observe that $H^\prime(\tau)\neq 0$, since $H$ is nonconstant.

Now we generalize Lemma \ref{lemma:powers-Lagrange} to power series $\psi \in \K^{\star}$ with $Q_{\psi} \geq 1$. Before we state and prove this generalization we point out that for $\psi \in \K^{\star}$ with Khinchin family $(Y_t)$ and $Q_{\psi} \geq 1$ we have the equality in distribution
\begin{align}\label{eq:eq_distr_Q}
	Y_t \stackrel{d}{=} Q_{\psi} \cdot W_{t^Q}, \quad \text{ for all } t \in [0,R_{\psi}),
\end{align}
where $\psi(z) = \phi(z^{Q_{\psi}})$, for certain auxiliary power series $\phi \in \K$ with Khinchin family $(W_t)$.

From (\ref{eq:eq_distr_Q}) it follows that
\begin{align}\label{eq: mean_variance_Q}
	m_{\psi}(t) = Q_{\psi} \cdot m_{\phi}(t^Q) \quad \text{ and } \quad \sigma_{\psi}(t) = Q_{\psi}\cdot \sigma_{\phi}(t^{Q_{\psi}}),
\end{align}
for all $t \in [0,R_{\psi})$.

From (\ref{eq: mean_variance_Q}) it follows that
\begin{align}\label{eq:phi_tau_Q}
	m_{\phi}(\tau^{Q_{\psi}}) = 1/Q_{\psi}.
\end{align}

\begin{lem}\label{lemma:H_saltos} Let $\psi \in \K^\star$ with apex $\tau$ and $Q_{\psi} \geq 1$. Fix an integer $m \in \{0,1, \dots,Q_{\psi}~-~1\}$. Let $g$ be the solution of Lagrange's equation with data $\psi$. 
	
	Let $H$ be a nonconstant power series, with nonnegative coefficients and having radius of convergence $R > \tau$. Assume that 
	\begin{align*}
		\textsc{coeff}_{[n]}[H(z)] = 0, \quad \text{ if } n \notin \mathbb{N}_{m,Q_{\psi}},
	\end{align*}
	then 
	\begin{align*}
		\textsc{coeff}_{[n]}\left[ H(g(z))\right] \sim  H^{\prime}(\tau)\frac{Q_\psi}{\sqrt{2\pi}} \frac{\tau}{\sigma_\psi(\tau)}\frac{1}{n^{3/2}} \Big(\frac{\psi(\tau)}{\tau}\Big)^n\, , 
	\end{align*}
	$\mbox{as $n \to \infty$ and $n \in \mathbb{N}_{m,Q_{\psi}}$}$. 
	
	This implies that there exists a constant $C > 0$ such that 
	\begin{align*}
		\textsc{coeff}_{[n]}\left[H(g(z))\right] \leq C \, \rho^{-n} n^{-3/2},
	\end{align*}
	as $n \rightarrow \infty$ and $n \in \mathbb{N}_{m,Q_{\psi}}$.
	
\end{lem}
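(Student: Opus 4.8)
There are two natural routes. One is to differentiate, writing $\frac{d}{dz}H(g(z)) = H'(g(z))\,g'(z)$ and feeding this product into the periodic Tauberian Corollary \ref{tauberian-corollary-Q}, with $C(z) = g'(z)$ (whose coefficients decay like $n^{-1/2}\rho^{-n}$, so the exponent $\alpha = 1/2 < 1$ is admissible) and $B(z) = H'(g(z))$, evaluated at $r = \rho$ where $B(\rho) = H'(\tau) \neq 0$. This route, however, needs an a priori bound $\textsc{coeff}_{[n]}[H'(g)] = O(n^{-\beta}\rho^{-n})$ with $\beta > 1$ (itself an instance of the statement under proof) and careful bookkeeping of three residue classes modulo $Q := Q_\psi$. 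I would therefore take the cleaner route of reducing outright to the already-established case $Q_\psi = 1$, namely Lemma \ref{lemma:powers-Lagrange}.

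For the reduction, recall from (\ref{eq:formula OtterMeinMoon}) that $A_n = 0$ for $n \notin \mathbb{N}_{1,Q}$, so $g$ is supported on exponents $\equiv 1 \pmod{Q}$ and I may write $g(z) = z\,G(z^{Q})$ with $G$ a power series with nonnegative coefficients. Putting $\tilde g(w) := w\,G(w)^{Q}$, Lagrange's equation $g(z) = z\,\phi(g(z)^{Q})$ unwinds to $G(w) = \phi(\tilde g(w))$ and hence to $\tilde g(w) = w\,\phi(\tilde g(w))^{Q} = w\,\Psi(\tilde g(w))$, where $\Psi := \phi^{Q}$; that is, $\tilde g$ is itself a solution of Lagrange's equation, now with data $\Psi$, and $\tilde g(z^{Q}) = g(z)^{Q}$. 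I would first check that $\Psi$ is admissible for Lemma \ref{lemma:powers-Lagrange}: since $\phi \in \K$ is nonconstant, $\Psi \in \K$ with $Q_\Psi = 1$, while $m_\Psi = Q\,m_\phi$ together with (\ref{eq:phi_tau_Q}) gives $m_\Psi(\tau^{Q}) = Q\cdot(1/Q) = 1$, which places $\Psi$ in $\K^\star$ and identifies its apex as $\tilde\tau = \tau^{Q}$.

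Next, writing $H(z) = z^{m}\mathcal H(z^{Q})$ (possible since $H$ is supported on $\mathbb{N}_{m,Q}$) and using $g(z)^{Q} = \tilde g(z^{Q})$ and $G(z^{Q}) = \phi(\tilde g(z^{Q}))$, I obtain the factorization
\begin{equation*}
  H(g(z)) = g(z)^{m}\,\mathcal H\!\big(g(z)^{Q}\big) = z^{m}\,K\!\big(\tilde g(z^{Q})\big), \qquad K := \phi^{m}\mathcal H,
\end{equation*}
where $K$ has nonnegative coefficients, is nonconstant, and has radius of convergence exceeding $\tilde\tau = \tau^{Q}$. Reading off coefficients, $\textsc{coeff}_{[jQ+m]}[H(g)] = \textsc{coeff}_{[j]}[K(\tilde g)]$. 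Lemma \ref{lemma:powers-Lagrange} applied to $K(\tilde g)$ gives $\textsc{coeff}_{[j]}[K(\tilde g)] \sim K'(\tilde\tau)\,\tilde A_j$, and the Otter--Meir--Moon formula (\ref{eq:formula OtterMeinMoon}) applied to $\tilde g$ --- using $\Psi(\tilde\tau)/\tilde\tau = (\psi(\tau)/\tau)^{Q} = \rho^{-Q}$ and $\sigma_\Psi(\tilde\tau) = \sigma_\psi(\tau)/\sqrt{Q}$, the latter from (\ref{eq: mean_variance_Q}) --- gives $\tilde A_j \sim \frac{\sqrt{Q}\,\tau^{Q}}{\sqrt{2\pi}\,\sigma_\psi(\tau)}\,j^{-3/2}\rho^{-Qj}$. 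Setting $n = jQ+m$, so $n \sim Qj$ and $n^{3/2}\sim Q^{3/2}j^{3/2}$, these combine to the claimed asymptotic, provided the constants match.

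The one nonformal step, and the main obstacle, is precisely this constant matching: I must verify $K'(\tau^{Q}) = H'(\tau)\,\tau^{1-Q}\rho^{-m}/Q$. Differentiating $K = \phi^{m}\mathcal H$ at $\tau^{Q}$ and $H(z) = z^{m}\mathcal H(z^{Q})$ at $\tau$ writes both $K'(\tau^{Q})$ and $H'(\tau)$ as the same two-term linear combinations of $\mathcal H(\tau^{Q})$ and $\mathcal H'(\tau^{Q})$; equating them, the identity collapses to $\tau\psi'(\tau) = \psi(\tau)$, which is exactly the characterization of the apex. This is where membership in $\K^\star$ does the essential work. Finally, the stated upper bound is immediate from the asymptotic just proved: off $\mathbb{N}_{m,Q}$ the coefficient vanishes, and on $\mathbb{N}_{m,Q}$ it is eventually at most a constant multiple of $\rho^{-n}n^{-3/2}$.
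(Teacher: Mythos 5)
Your argument is correct, and it follows a genuinely different route from the paper's. The paper proves Lemma \ref{lemma:H_saltos} directly: it applies the Lagrange--B\"urmann formula \eqref{eq:formula_H(g)} to write $\textsc{coeff}_{[n]}[H(g)]$ as $\tfrac1n\textsc{coeff}_{[n-1]}[H'(z)\psi(z)^n]$, descends to the auxiliary variable via $\psi(z)=\phi(z^{Q_\psi})$, and then invokes a large-powers (local-limit type) asymptotic from \cite{K_dos} in the regime $j/n\to 1/Q_\psi$. You instead eliminate the periodicity at the level of Lagrange's equation itself: the substitution $g(z)=zG(z^{Q})$, $\tilde g(w)=wG(w)^{Q}$ turns the data $\psi=\phi(\cdot^{Q})$ into $\Psi=\phi^{Q}$ with $Q_\Psi=1$ and apex $\tau^{Q}$, after which the aperiodic Lemma \ref{lemma:powers-Lagrange} applies to $K=\phi^m\mathcal H$. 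I checked the delicate points: $\Psi\in\K^\star$ with $m_\Psi(\tau^Q)=Q\,m_\phi(\tau^Q)=1$; $Q_{\phi^Q}=1$ (which does need the observation that $0$ lies in the support of $\phi$, so $\mathrm{supp}(\phi)\subseteq\mathrm{supp}(\phi^Q)$ --- worth one line); $\sigma_\Psi(\tau^Q)=\sigma_\psi(\tau)/\sqrt{Q}$; and the constant identity $K'(\tau^{Q})=H'(\tau)\,\tau^{1-Q}\rho^{-m}/Q$, which as you say reduces exactly to the apex relation $\tau\psi'(\tau)=\psi(\tau)$ after expanding both derivatives in terms of $\mathcal H(\tau^Q)$ and $\mathcal H'(\tau^Q)$. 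Putting $n=jQ+m$ then reproduces the stated constant $H'(\tau)\,Q_\psi\tau/(\sqrt{2\pi}\,\sigma_\psi(\tau))$ precisely. What each approach buys: the paper's route is shorter given the large-powers machinery of \cite{K_dos} and extends more readily to the boundary case $\tau=R_\psi$ discussed in the remark after Theorem \ref{thm:mainthmlagrange}; yours replaces that machinery by the already-cited aperiodic Lemma \ref{lemma:powers-Lagrange} of Meir--Moon plus an algebraic change of variables, and the constant-matching step is a pleasant independent confirmation of the role of the apex. You were also right to discard your first route through Corollary \ref{tauberian-corollary-Q}, which would be circular here.
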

\begin{proof}
	By hypothesis $H(z) = z^m \hat{H}(z^{Q_{\psi}})$. Thus
	\begin{align}\label{eq: H_Q}
		H^{\prime}(z) = z^{m-1}\tilde{H}(z^{Q_{\psi}}), \quad \text{ for all } m \in \{1,2,\dots,Q_{\psi}-1\},
	\end{align}
	and $H^{\prime}(z) = z^{Q_{\psi}-1}{H}^{*}(z^{Q_{\psi}})$, for $m = 0$. Here $\hat{H}$, $\tilde{H}$ and $H^{*}$ are all power series with non-negative coefficients. 
	Besides we may write $\psi(z) = \phi\left(z^{Q_{\psi}}\right)$, where $\phi$ is a power series in $\K$.
	
	Formula (\ref{eq:formula_H(g)}) tell us that
	\begin{align*} 
		\textsc{coeff}_{[n]}\left[H(g(z))\right] = \frac{1}{n}\textsc{coeff}_{[n-1]}\left[H^{\prime}(z)\psi(z)^n\right],
	\end{align*}
	therefore, for integers $n \geq m$, using (\ref{eq: H_Q}), we find that
	\begin{align*}
		\textsc{coeff}_{[n]}\left[H(g(z))\right] = \frac{1}{n}\textsc{coeff}_{[n-m]}\left[\tilde{H}(z^{Q_{\psi}})\phi(z^{Q_{\psi}})^n\right],
	\end{align*}
	for all $m \in \{1,2,\dots,Q_{\psi}-1\}$ and also that
	\begin{align*}
		\textsc{coeff}_{[n]}\left[H(g(z))\right] = \frac{1}{n}\textsc{coeff}_{[n-Q_{\psi}]}\left[{H}^{*}(z^{Q_{\psi}})\phi(z^{Q_{\psi}})^n\right],
	\end{align*}
	for $m = 0$.
	
	Fix $l \in \{1,2,\dots,Q_{\psi}-1\}$. For $n = l+jQ_{\psi}$ we have
	\begin{align}\label{eq: H(g)_Q=1_pruebalemma}
		\frac{1}{n}\textsc{coeff}_{[n-l]}\left[\tilde{H}(z^{Q_{\psi}})\phi(z^{Q_{\psi}})^n\right] = \frac{1}{n}\textsc{coeff}_{[j]}\left[\tilde{H}(z)\phi(z)^n\right].
	\end{align}
	For $l = Q_{\psi}$ and $n = l+jQ_{\psi}$ we have
	\begin{align}\label{eq: H(g)_Q=1_pruebalemma2}
		\frac{1}{n}\textsc{coeff}_{[n-l]}\left[{H}^{*}(z^{Q_{\psi}})\phi(z^{Q_{\psi}})^n\right] = \frac{1}{n}\textsc{coeff}_{[j]}\left[{H}^{*}(z)\phi(z)^n\right].
	\end{align}
	
	From (\ref{eq: mean_variance_Q}) and (\ref{eq:phi_tau_Q}) it follows that
	\begin{align*}
		m_{\phi}(\tau^{Q_{\psi}}) = 1/Q_{\psi} \quad \text{ and } \quad \sigma_{\psi}(\tau) = Q_{\psi} \cdot \sigma_{\phi}(\tau^{Q_{\psi}}).
	\end{align*}
	Moreover $\tau^{Q_{\psi}j} = \tau^{n-m}$.
	
	Now we apply a large powers result from Section 8 in \cite{K_dos} (this is the case $j/n \rightarrow~1/Q_{\psi},$ as $n \rightarrow \infty$) to the right-hand side of (\ref{eq: H(g)_Q=1_pruebalemma}) and (\ref{eq: H(g)_Q=1_pruebalemma2}). Rearranging terms in the asymptotic formula (see Section 8 in \cite{K_dos}) we get that 
	\begin{align*}
		\textsc{coeff}_{[n]}\left[H(g(z))\right] \sim  H^{\prime}(\tau)\frac{Q_\psi}{\sqrt{2\pi}} \frac{\tau}{\sigma_\psi(\tau)}\frac{1}{n^{3/2}} \Big(\frac{\psi(\tau)}{\tau}\Big)^n\, , \quad \mbox{as $n \to \infty$} \text{ and } n \in \mathbb{N}_{m,Q_{\psi}}\,.
	\end{align*}
\end{proof}

\begin{remark}\label{remark: asymptotic_ineq} With the same hypothesis of Lemma \ref{lemma:H_saltos} we have that
	\begin{align*}
		\textsc{coeff}_{[n]}\left[ H(g(z))\right] \sim H^{\prime}(\tau) \rho^{-(m-1)}A_{n-m+1}, \quad \text{ as } n \rightarrow \infty \text{ and } n \in \mathbb{N}_{m,Q_{\psi}}.
	\end{align*}
	This follows combining Lemma \ref{lemma:H_saltos} with formula (\ref{eq:formula OtterMeinMoon}).
	
	From the previous asymptotic formula we obtain that there exists a constant $C_{H,\psi} > 0$ such that 
	\begin{align*}
		\textsc{coeff}_{[n]}\left[ H(g(z))\right] \leq C_{H,\psi}A_{n-m+1}, \quad \text{ as } n \rightarrow \infty \text{ and } n \in \mathbb{N}_{m,Q_{\psi}}.
	\end{align*}
	
	The previous relations also give that there exists a constant $C > 0$ such that
	\begin{align*}
		\textsc{coeff}_{[n]}\left[ H(g(z))\right] \leq C \rho^{-n}n^{-3/2}, \quad \text{ as } n \rightarrow \infty \text{ and } n \in \mathbb{N}_{m,Q_{\psi}}.
	\end{align*} \finremark
\end{remark}

\newpage

\section{Iteration and Lagrange's equation}

We fix now a power series $\psi(z) = \sum_{n =0}^{\infty}b_n z^n$ in $\mathcal{K}^{\star}$ with apex $\tau$. Recall that the radius of convergence of the solution $g$ of Lagrange's equation with data $\psi$ is given by $\rho=\tau/\psi(\tau)$.

\

For $z$ in the disk $\{|z| \leq \rho\}$ consider the sequence of power series given by the recurrence
\begin{align}\label{eq:iteration_scheme_height}
	g_n(z)=z \psi(g_{n{-}1}(z))\, , \quad \mbox{for $n \ge 1$}\, ,
\end{align}
starting off with $g_0(z)=b_0z$. 

This sequence is well defined. Observe that
\begin{align}\label{eq: ineq_iteration_altura}
	b_0 \, \rho = \tau/(\psi(\tau)/b_0) < \tau < R_{\psi}\, .
\end{align}

Using inequality (\ref{eq: ineq_iteration_altura}) combined with the fact that $\psi \in \K^{\star}$ we find that
\begin{align*}
	|g_1(z)| = |z\psi(b_0 z)| < \rho \, \psi(\tau)= \tau, \quad \text{ for all } |z| \leq \rho \, .
\end{align*}

By induction on $n \geq 0$ we conclude that for any integer $n \geq 0$ we have
\begin{align}\label{eq: g_n less than tau}
	|g_n(z)| < \tau, \quad \text{ for all } |z| \leq \rho\, ,
\end{align}
in particular, this implies that the functions in the sequence $\{g_n\}_{n \geq 0}$ are continuous on the boundary of the disk of center $z = 0$ and radius $\rho$. 

The sequence $(g_n)_{n \ge 1}$ converges uniformly in the closure of the disk $\D(0, \rho)$ to the power series $g$.
See Remark 2.7 in \cite[p. 9]{Sokal} and also \cite{Renyi} for more details about this iteration setting.

\

The iteration scheme we are interested in this paper is not the above but the following. Start with $g_0(z)=g(z)$ and define recursively
\begin{equation}\label{eq:iteration scheme} g_k(z)=z\big[\psi(g_{k{-}1}(z))-b_0\big], \quad \mbox{for $k \ge 1$}\, .\end{equation}

Write the expansion $g_k(z)=\sum_{n=1}^\infty A^{(k)}_n z^n$. It follows that $A_n^{(k)}=0$  for $1\le n \le k$ and that
\begin{equation}\label{eq:bounds for Ank}0 \le A_n^{(k)} \le A_n^{(k{-}1)}\le A_n, \quad \mbox{for $n \ge 1$ and $k \ge 1$}.\end{equation}

This means, in particular, that each power series $g_k$ has radius of convergence at least $\rho$, and that the sequence of the $g_k$ converges to $0$ uniformly in the closed disk $\textrm{cl}(\D(0, \rho))$

\

Define the bivariate analytic function
\begin{align*}
	G(z,w) = z(\psi(w)-b_0)\,, \quad \text{ for } |z| \leq \rho, |w| \leq \tau\,.
\end{align*}

The bound
\begin{align}\label{eq:Glesstau}
	|G(z,w)| \leq \tau\,, \quad \text{ for }  |z| \leq \rho, |w| \leq \tau\,,
\end{align}
follows since, for  $|z|\le\rho$ and $|w|\le \tau$, we have that
$$|z||\psi(w)-b_0|\le |z| \sum_{j=1}^\infty |b_j| |w|^j\le |z|\psi(|w|)\le\rho \psi(\tau)=\tau\,.$$

Inequality \eqref{eq:Glesstau} allows us to  iterate $G(z,w)$ with respect to the second variable: for any integer $k \geq 1$ we denote
\begin{align*}
	G_k(z,w) = G(z,G_{k-1}(z,w)), \quad \text{ for all } |z| \leq \rho \text{ and }  |w| \leq \tau\,,
\end{align*}
the $k$-th iterate of $G(z,w)$ in the second variable, starting with  $G_0(z,w) = w$.

For any integer $k \geq 0$ we have that
\begin{align}\label{eq:iteration_g_k}
	g_k(z) = G_k(z,g(z)), \quad \text{ for } |z| \leq \rho\,.
\end{align}

Observe that
\begin{equation}\label{eq:partial G}
	z \frac{\partial G}{\partial z}(z,w)=G(z,w) \quad \text{ and } \quad \frac{\partial G}{\partial w}(z,w)=z \psi^\prime(w), \quad \text{ for } |z| \leq \rho  \text{ and }  |w| \leq \tau\,.
\end{equation}

\

The main aim of this section is to obtain the following asymptotic result about the coefficients $A_n^{(k)}$:
\begin{theo}\label{thm:mainthmlagrange} For each $k \ge 0$,
	$$
	\lim_{\substack{n \to \infty; \\ n\in \mathbb{N}_{1,Q_\psi}}} \frac{A_n^{(k)}}{A_n}=\frac{\partial G_k}{\partial w}(\rho, \tau)\, .
	$$
\end{theo}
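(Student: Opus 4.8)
The plan is to use the representation $g_k(z)=G_k(z,g(z))$ from \eqref{eq:iteration_g_k} and to Taylor expand $G_k(z,w)$ in its second variable about the apex value $w=\tau$, the point where $g$ develops its square-root singularity since $g(\rho)=\tau$. Writing
\[
G_k(z,w)=G_k(z,\tau)+B(z)\,(w-\tau)+R_k(z,w),\qquad B(z):=\frac{\partial G_k}{\partial w}(z,\tau),
\]
with $R_k(z,w)=O((w-\tau)^2)$, and substituting $w=g(z)$ yields
\[
g_k(z)=G_k(z,\tau)+B(z)\,\bigl(g(z)-\tau\bigr)+R_k\bigl(z,g(z)\bigr).
\]
Since the target constant is exactly $B(\rho)=\tfrac{\partial G_k}{\partial w}(\rho,\tau)$, it suffices to prove that the middle term contributes $\sim B(\rho)A_n$ and that the two outer terms are $o(A_n)$. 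The base case $k=0$ is immediate, as $G_0(z,w)=w$ forces $A_n^{(0)}=A_n$ and $\tfrac{\partial G_0}{\partial w}(\rho,\tau)=1$.

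First I would dispose of the two outer analytic terms. From \eqref{eq:partial G} both $G_k(\cdot,\tau)$ and $B=\partial_wG_k(\cdot,\tau)$ are built by iterating and differentiating $G(z,w)=z(\psi(w)-b_0)$ at $w=\tau$, the intermediate values being $G_j(\rho,\tau)=g_j(\rho)$. By \eqref{eq:bounds for Ank} one has $A_n^{(j)}=0$ for $n\le j$, so $g_j(\rho)<\tau<R_\psi$ strictly for $1\le j\le k-1$; by continuity the iterates $G_j(z,\tau)$ remain below $R_\psi$ on a disk $|z|<\rho+\varepsilon$, whence both $G_k(\cdot,\tau)$ and $B$ are analytic there, have nonnegative coefficients, and have radius of convergence strictly larger than $\rho$. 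In particular their coefficients are $O(r^{-n})$ for some $r>\rho$, hence $o(A_n)$ by \eqref{eq:formula OtterMeinMoon}. This kills the $G_k(z,\tau)$ term, and reduces the middle term, since $\textsc{coeff}_{[n]}(B\cdot(g-\tau))=\textsc{coeff}_{[n]}(B\,g)-\tau\,\textsc{coeff}_{[n]}(B)$ with the last summand negligible, to showing $\textsc{coeff}_{[n]}(B\,g)\sim B(\rho)A_n$. This is precisely the companion of Theorem \ref{tauberian} due to Schur and Sz\'asz: $B$ has radius of convergence $>\rho$ and $A_n$ satisfies the ratio condition along $\mathbb{N}_{1,Q_\psi}$, so $\textsc{coeff}_{[n]}(B\,g)\sim B(\rho)A_n$.

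The heart of the matter is the remainder. Writing $R_k(z,w)=(w-\tau)^2S_k(z,w)$ with $S_k$ the analytic Taylor remainder and substituting gives $R_k(z,g(z))=(g(z)-\tau)^2\,\Phi(z)$, where $\Phi(z)=S_k(z,g(z))$ is analytic on $|z|<\rho$ and inherits a square-root singularity at $z=\rho$, so $\textsc{coeff}_{[n]}(\Phi)=O(\rho^{-n}n^{-3/2})$. The decisive cancellation comes from $g(\rho)=\tau$: Lemma \ref{lemma:powers-Lagrange} applied to $H(w)=w^2$ gives $\textsc{coeff}_{[n]}(g^2)\sim 2\tau A_n$, so in $(g-\tau)^2=g^2-2\tau g+\tau^2$ the two $2\tau A_n$ terms cancel and $\textsc{coeff}_{[n]}((g-\tau)^2)=o(A_n)$; in fact the singular part of $(g-\tau)^2$ is of order $(\rho-z)^{3/2}$, giving coefficients of the smaller order $\rho^{-n}n^{-5/2}$. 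To transfer this to the product $(g-\tau)^2\Phi$ I would split the convolution at $n/2$: on $i\le n/2$ one factors out $\rho^{-n}n^{-3/2}$ from $\textsc{coeff}_{[n-i]}(\Phi)$ and uses dominated convergence together with the vanishing $\sum_i\textsc{coeff}_{[i]}((g-\tau)^2)\rho^i=(g(\rho)-\tau)^2=0$; on $i>n/2$ one uses the order $\rho^{-m}m^{-5/2}$ of the coefficients of $(g-\tau)^2$ together with the convergence of $\Phi(\rho)$. Both halves are $o(A_n)$, so $\textsc{coeff}_{[n]}(R_k(z,g))=o(A_n)$. \textbf{This remainder estimate is the main obstacle}: a crude absolute-value bound on the convolution only yields $O(A_n)$, and it is exactly the vanishing of $(g-\tau)^2$ at the singularity $z=\rho$, reflecting $g(\rho)=\tau$, that upgrades the bound to $o(A_n)$.

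Finally, for $Q_\psi>1$ the residue-class bookkeeping must be respected throughout: $g$, $g_k$, and the auxiliary series are supported on $\mathbb{N}_{1,Q_\psi}$ or on fixed classes, so the Tauberian and large-powers inputs are needed in their progression forms, Corollary \ref{tauberian-corollary-Q} and Lemma \ref{lemma:H_saltos} (and the bounds in Remark \ref{remark: asymptotic_ineq}). The cleanest way to organize this is to pass to the auxiliary series $\phi$ with $\psi(z)=\phi(z^{Q_\psi})$ and to write $g(z)=z\,\mathfrak g(z^{Q_\psi})$ and $g_k(z)=z\,\mathfrak g_k(z^{Q_\psi})$; then for $n=1+mQ_\psi$ the ratio $A_n^{(k)}/A_n$ equals $\textsc{coeff}_{[m]}(\mathfrak g_k)/\textsc{coeff}_{[m]}(\mathfrak g)$, the transformed series fall under the $Q=1$ analysis carried out above, and every step goes through verbatim.
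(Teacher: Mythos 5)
Your decomposition is genuinely different from the paper's. The paper never Taylor-expands $G_k$ about $w=\tau$: it differentiates \eqref{eq:iteration_g_k} to get $g_k'(z)=\tfrac{\partial G_k}{\partial z}(z,g(z))+\tfrac{\partial G_k}{\partial w}(z,g(z))\,g'(z)$, shows via Lemmas \ref{lemma: bigOpartialz} and \ref{lemma: bigOpartial-w} that both partial derivatives, evaluated at $w=g(z)$ (not at $w=\tau$), have coefficients $O(\rho^{-n}n^{-3/2})$ --- these are power series with \emph{nonnegative} coefficients dominated via \eqref{eq:bounds for Ank} by functions $H(g)$ covered by Lemma \ref{lemma:H_saltos} --- and then applies the Meir--Moon Tauberian theorem (Corollary \ref{tauberian-corollary-Q}) to the product $\tfrac{\partial G_k}{\partial w}(z,g(z))\,g'(z)$. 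That route has no remainder term at all: the Tauberian theorem does exactly the job your second-order remainder analysis is meant to do. Your treatment of the constant term $G_k(z,\tau)$ and of the linear term $B(z)(g(z)-\tau)$ is correct (the radius-of-convergence argument for $G_j(z,\tau)$, using $g_j(\rho)<\tau$, is sound, as is the reduction to the auxiliary series when $Q_\psi>1$).

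The gap is in the remainder estimate, and it is not merely technical. First, the bound $\textsc{coeff}_{[m]}\bigl[(g-\tau)^2\bigr]=O(\rho^{-m}m^{-5/2})$ does not follow from anything in the paper: Lemma \ref{lemma:powers-Lagrange} with $H(w)=w^2$ gives $\textsc{coeff}_{[m]}(g^2)-2\tau A_m=o(\rho^{-m}m^{-3/2})$ and nothing finer; a $m^{-5/2}$ bound would require a second-order refinement of \eqref{eq:formula OtterMeinMoon}. Second, and decisively, your dominated-convergence step on the range $i\le n/2$ needs to exploit the signed cancellation $\sum_i\textsc{coeff}_{[i]}\bigl[(g-\tau)^2\bigr]\rho^i=0$, and for that you must know that $n^{3/2}\rho^{\,n-i}\,\textsc{coeff}_{[n-i]}(\Phi)$ converges to one and the same constant for each fixed $i$ --- i.e.\ you need an asymptotic \emph{equivalent} $\textsc{coeff}_{[m]}(\Phi)\sim c\,\rho^{-m}m^{-3/2}$, not the upper bound $O(\rho^{-m}m^{-3/2})$ you state. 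With only the $O$-bound, the sum $\sum_{i\le n/2}\textsc{coeff}_{[i]}\bigl[(g-\tau)^2\bigr]\textsc{coeff}_{[n-i]}(\Phi)$ can only be estimated through absolute values of the first factor, which returns $O(A_n)$ --- precisely the insufficient bound you acknowledge needing to beat. Supplying the missing asymptotic for $\Phi(z)=S_k(z,g(z))$ is itself a nontrivial task: $S_k$ is a Taylor remainder whose power-series coefficients in $(z,w)$ are \emph{not} nonnegative, so neither Lemma \ref{lemma:H_saltos} nor Theorem \ref{tauberian} applies to it, and even the claimed $O(\rho^{-m}m^{-3/2})$ bound for $\Phi$ requires a uniform control of the series $\sum_{j\ge2}\frac{1}{j!}\partial_w^jG_k(z,\tau)(g(z)-\tau)^{j-2}$ that you have not given. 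Until the coefficients of $\Phi$ are shown to have a genuine $\rho^{-m}m^{-3/2}$ asymptotic (or the remainder is handled by some other device), the proof establishes only $A_n^{(k)}=\tfrac{\partial G_k}{\partial w}(\rho,\tau)A_n+O(A_n)$, which is vacuous.
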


The proof of Theorem  \ref{thm:mainthmlagrange} is contained in Section \ref{section:proof_mainthmlagrange} and it is based on a number of lemmas about $G(z,w)$ which are described in the next section.

\begin{remark}\label{remark: extend}
	We can extend Theorem \ref{thm:mainthmlagrange} in the following way: assume that $\psi \in \K$ has radius of convergence $R_{\psi} < \infty$ and also that 
	\begin{enumerate}
		\item $\lim_{t \uparrow R_{\psi}}m_{\psi}(t) \triangleq M_{\psi} = 1$,
		\item $\lim_{t \uparrow R_{\psi}}\psi^{\prime\prime}(t) < \infty$.
	\end{enumerate}
	Denote $\rho = R_{\psi}/\psi(R_{\psi})$ and $\tau = R_{\psi}$, then we have
	\begin{align*}
		\lim_{\substack{n \to \infty; \\ n\in \mathbb{N}_{1,Q_\psi}}} \frac{A_n^{(k)}}{A_n}=\frac{\partial G_k}{\partial w}(\rho, \tau).
	\end{align*}
	
	The argument to prove this result is exactly the same than that of Theorem \ref{thm:mainthmlagrange}. The only difference appears in the proof of the auxiliary lemmas of Subsection \ref{subsec:concerningG(z,w)}, these proofs now rely on some asymptotic results appearing in \cite{K_dos}. These results extend the asymptotic formulas in Section \ref{sec:Lagrange} to the case $M_{\psi} = 1$ and $\tau = R_{\psi}$.
\finremark	
\end{remark}

\begin{remark}\label{remark generalized_iterate} It is possible to generalize the previous iteration scheme in the following way: consider $\psi(z) = \sum_{n \geq 0}b_n z^n \in \K^{\star}$ and fix $\mathcal{I} \subset \{n \geq 0 : b_n > 0\}$ a proper subset of indices such that $0 \in \mathcal{I}$. Define 
	\begin{align}\label{eq:generalized_iterate}
		S_{\mathcal{I}}(z,w) = z(\psi(w)-\psi_{\mathcal{I}}(w)) =z\psi_{\mathcal{I}^c}(w), \quad \text{ for all } |z| \leq \rho \text{ and } |w| \leq \tau\,,
	\end{align}
	where $\psi_{\mathcal{I}}(z) \triangleq \sum_{n \in \mathcal{I}}b_n z^n$ and $\mathcal{I}^c$ denotes the complement of $\mathcal{I}$ in the set of indices $\{n\geq 0 : b_n > 0\}$.
	
	Observe that
	\begin{align}\label{eq:iterate_section}
		|S_{\mathcal{I}}(z,w)| \leq G(|z|,|w|) \leq \tau, \quad \text{ for all } |z| \leq \rho \text{ and }|w| \leq \tau\,.
	\end{align}
	
	Inequality (\ref{eq:iterate_section}) allows us to iterate $S(z,w)$ with respect to the second variable: for any integer $m \geq 1$ we denote
	\begin{align*}
		S_{\mathcal{I},m}(z,w) = S_{\mathcal{I}}(z,S_{\mathcal{I},m-1}(z,w)), \quad \text{ for all } |z| \leq \rho \text{ and } |w| \leq \tau\,,
	\end{align*}
	the $m$-th iterate in the second variable of $S_{\mathcal{I}}(z,w)$, starting off with $S_{\mathcal{I},0}(z,w) = w$.
	
	Now consider the sequence of analytic functions 
	\begin{align}
		f_{\mathcal{I},m}(z) = S_{\mathcal{I},m}(z,g(z)), \quad \text{ for all } |z| \leq \rho \,,
	\end{align}
	this power series has Taylor expansion
	\begin{align}\label{eq:Taylor_f_I}
		f_{\mathcal{I},m}(z) = \sum_{n \geq 0}B_n^{(m)}z^n, \quad \text{ for all } |z| \leq \rho\,,
	\end{align} 
	and verifies the recurrence
	\begin{align*}
		f_{\mathcal{I},m}(z) = z(\psi-\psi_{\mathcal{I}})(f_{\mathcal{I},m-1}(z)),\quad \text{ for all } m \geq 1\,, 
	\end{align*}
	starting with $f_{\mathcal{I},0}(z) = g(z)$.
	
	Taking $\mathcal{I} = \{0\}$ in (\ref{eq:generalized_iterate}) we retrieve the iteration scheme (\ref{eq:iteration scheme}). 
\finremark	
\end{remark}

\subsection{Some lemmas concerning $G(z,w)$}\label{subsec:concerningG(z,w)}

The following lemma gives an explicit expression for the partial derivative with respect to $z$ of the iterate $G_k(z,w)$.

\begin{lem}\label{general-partial-z} For any integer $k \geq 1$ we have
	\begin{align}\label{partialzformula}
		\frac{\partial G_k}{\partial z}(z,w) &= \sum_{j=1}^{k}\frac{\partial G}{\partial z}(z,G_{k-j}(z,w))\prod_{i=1}^{j-1}\frac{\partial G}{\partial w}(z,G_{k-i}(z,w))\,,
	\end{align}
	for  $|z| \leq \rho$ and $|w| \leq \tau$, with the convention that the empty product is $1$.
\end{lem}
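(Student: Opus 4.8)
The plan is to prove \eqref{partialzformula} by induction on $k \geq 1$, the engine being the chain rule applied to the recursion $G_k(z,w)=G(z,G_{k-1}(z,w))$. Differentiation is legitimate here because, by the bound \eqref{eq:Glesstau}, every iterate $G_{k-1}(z,w)$ lands in the region $\{|w|\le\tau\}$ on which $G(z,\cdot)$ is analytic, so the composite $G_k$ is analytic in $z$ on $\{|z|\le\rho\}$ and the chain rule applies.

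For the base case $k=1$ I would note that $G_1(z,w)=G(z,G_0(z,w))=G(z,w)$, since $G_0(z,w)=w$ is independent of $z$; hence $\partial G_1/\partial z(z,w)=\partial G/\partial z(z,w)$, which is exactly the single $j=1$ term of the right-hand side of \eqref{partialzformula}, its product being empty and thus equal to $1$. For the inductive step, assuming the formula for $k-1$, I would differentiate $G_k(z,w)=G(z,G_{k-1}(z,w))$ in $z$ to obtain
$$\frac{\partial G_k}{\partial z}(z,w)=\frac{\partial G}{\partial z}\bigl(z,G_{k-1}(z,w)\bigr)+\frac{\partial G}{\partial w}\bigl(z,G_{k-1}(z,w)\bigr)\,\frac{\partial G_{k-1}}{\partial z}(z,w),$$
substitute the inductive hypothesis for $\partial G_{k-1}/\partial z$, and distribute the factor $\partial G/\partial w(z,G_{k-1}(z,w))$ through the resulting sum.

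The crux, and the only place requiring care, is the reindexing that reassembles the distributed sum into the target shape. Shifting the summation index $j\mapsto j+1$ sends each inner term $\partial G/\partial z(z,G_{k-1-j}(z,w))$ to $\partial G/\partial z(z,G_{k-(j+1)}(z,w))$, matching the general summand; simultaneously the prepended factor $\partial G/\partial w(z,G_{k-1}(z,w))$ is exactly the $i=1$ factor missing from $\prod_{i=1}^{j}\partial G/\partial w(z,G_{k-i}(z,w))$ after the shift, so it merges cleanly into the product. The leftover first term $\partial G/\partial z(z,G_{k-1}(z,w))$ then furnishes the $j=1$ summand (with empty product), and together these recover \eqref{partialzformula} for index $k$. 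I expect the main obstacle to be purely notational bookkeeping: keeping the double reindexing of the nested sum and product consistent so that the boundary factor absorbs into the product without an off-by-one error.
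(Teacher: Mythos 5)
Your proposal is correct and follows essentially the same route as the paper: induction on $k$, the chain rule applied to $G_k(z,w)=G(z,G_{k-1}(z,w))$, and the reindexing that absorbs the factor $\partial G/\partial w(z,G_{k-1}(z,w))$ into the product. The only difference is cosmetic (stepping from $k-1$ to $k$ rather than $k$ to $k+1$, plus an explicit analyticity remark the paper omits).
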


\begin{proof}
	Formula \eqref{partialzformula} is proved by induction on $k$. For $k = 1$, the right-hand side of \eqref{partialzformula} simplifies and reduces to
	$		({\partial G}/{\partial z})(z,w)$,
	as it should.
	
	Assume that  identity \eqref{partialzformula} holds for $k \ge 1$, then
	\begin{align*}
		(\star) \quad \frac{\partial G_{k+1} }{\partial z}(z,w) = \frac{\partial}{\partial z}G(z,G_k(z,w)) = \frac{\partial G}{\partial z}(z,G_k(z,w))+\frac{\partial G}{\partial w}(z,G_k(z,w))\frac{\partial G_k}{\partial z}(z,w)\,.
	\end{align*}
	Substituting the expression \eqref{partialzformula} for $\partial G_k/\partial z$  in $(\star)$ we find that
	\begin{align*}
		\frac{\partial G_{k+1}}{\partial z}(z,w) &= \sum_{j=1}^{k+1}\frac{\partial G}{\partial z}(z,G_{k+1-j}(z,w))\prod_{i=1}^{j-1}\frac{\partial G}{\partial w}(z,G_{k+1-i}(z,w))\,.
	\end{align*}
\end{proof}

The following lemma gives an asymptotic upper bound for the coefficients of $(\partial G_k/\partial z)(z,g(z))$.
\begin{lem}\label{lemma: bigOpartialz} For any integer $k \geq 1$ we have
	\begin{align*}
		\textsc{coeff}_{[n]}\left[\frac{\partial G_k}{\partial z}(z,g(z))\right] = O(\rho^{-n}n^{-3/2}), \quad \text{ as } n \rightarrow \infty \text{ and } n \in \mathbb{N}_{0,Q_{\psi}}\,.
	\end{align*}	
\end{lem}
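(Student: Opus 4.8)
The plan is to evaluate the explicit formula of Lemma~\ref{general-partial-z} along the curve $w=g(z)$ and then bound the resulting finite sum term by term. Setting $w=g(z)$, using $G_{k-j}(z,g(z))=g_{k-j}(z)$ from \eqref{eq:iteration_g_k}, and substituting the partial derivatives from \eqref{eq:partial G}, namely $(\partial G/\partial z)(z,w)=\psi(w)-b_0$ and $(\partial G/\partial w)(z,w)=z\psi'(w)$, the identity \eqref{partialzformula} becomes
\begin{align*}
\frac{\partial G_k}{\partial z}(z,g(z)) = \sum_{j=1}^{k}\big[\psi(g_{k-j}(z))-b_0\big]\prod_{i=1}^{j-1} z\,\psi'(g_{k-i}(z))\,.
\end{align*}
Since $k$ is fixed this is a finite sum, and for $j=1$ the empty product is $1$, so the summand is just $\psi(g_{k-1}(z))-b_0$. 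It therefore suffices to show that each summand, a finite product of factors of the two types $\psi(g_{k-j}(z))-b_0$ and $z\,\psi'(g_{k-i}(z))$, has coefficients $O(\rho^{-n}n^{-3/2})$ as $n\to\infty$ with $n\in\mathbb{N}_{0,Q_\psi}$.

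First I would bound the individual factors. By \eqref{eq:bounds for Ank} every iterate $g_m$ has nonnegative coefficients dominated by those of $g$; since $\psi$ and $\psi'$ have nonnegative coefficients and composition of power series with nonnegative coefficients is monotone coefficientwise, it follows that
\begin{align*}
0 \le \textsc{coeff}_{[n]}\big[\psi(g_{k-j}(z))-b_0\big] &\le \textsc{coeff}_{[n]}\big[\psi(g(z))-b_0\big]\,, \\
0 \le \textsc{coeff}_{[n]}\big[\psi'(g_{k-i}(z))\big] &\le \textsc{coeff}_{[n]}\big[\psi'(g(z))\big]\,.
\end{align*}
The two dominating series are of the form $H(g(z))$: with $H=\psi-b_0$ (nonconstant, nonnegative coefficients, radius $R_\psi>\tau$, supported on the positive multiples of $Q_\psi$, so $m=0$) and with $H=\psi'$ (supported on $\mathbb{N}_{Q_\psi-1,Q_\psi}$, so $m=Q_\psi-1$). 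Applying Remark~\ref{remark: asymptotic_ineq} (equivalently Lemma~\ref{lemma:H_saltos}) gives a constant $C$ with
\begin{align*}
\textsc{coeff}_{[n]}\big[\psi(g(z))-b_0\big] \le C\,\rho^{-n}n^{-3/2}\,, \quad n\in\mathbb{N}_{0,Q_\psi}\,,
\end{align*}
and $\textsc{coeff}_{[n]}[\psi'(g(z))]\le C\rho^{-n}n^{-3/2}$ for $n\in\mathbb{N}_{Q_\psi-1,Q_\psi}$. Multiplying the latter by $z$ shifts the residue class from $Q_\psi-1$ to $0$, so each factor $z\psi'(g_{k-i}(z))$ likewise has coefficients $O(\rho^{-n}n^{-3/2})$ supported on $\mathbb{N}_{0,Q_\psi}$. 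Thus every factor is supported in $\mathbb{N}_{0,Q_\psi}$, has vanishing constant term, and is bounded by $C\rho^{-n}n^{-3/2}$; absorbing the finitely many small indices into the constant, I may take this bound to hold for all $n\ge 1$.

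The last step is to control the coefficients of a product of such factors. Here the Meir--Moon theorem is unavailable, because both factor types decay like $n^{-3/2}$ with exponent $>1$, so the hypothesis $\alpha<1<\beta$ fails; instead I would use the elementary convolution estimate
\begin{align*}
\sum_{i=1}^{n-1} i^{-3/2}(n-i)^{-3/2} = O(n^{-3/2})\,, \quad \text{as } n\to\infty\,,
\end{align*}
which follows by splitting the sum at $i=n/2$ and using that $\sum_{i\ge 1}i^{-3/2}<\infty$. Consequently, the product of two power series with vanishing constant term and coefficients bounded by $C\rho^{-n}n^{-3/2}$ again has coefficients $O(\rho^{-n}n^{-3/2})$; iterating over the at most $k$ factors in each summand and summing the finitely many summands yields
\begin{align*}
\textsc{coeff}_{[n]}\Big[\frac{\partial G_k}{\partial z}(z,g(z))\Big] = O(\rho^{-n}n^{-3/2})\,, \quad \text{as } n\to\infty \text{ and } n\in\mathbb{N}_{0,Q_\psi}\,.
\end{align*}
I expect this final step to be the main obstacle: since both factor types are summable at $\rho$ rather than one being summable-with-room-to-spare and the other non-summable, one cannot feed them into Meir--Moon and must instead exploit directly that their coefficients times $\rho^{n}$ form convergent series in order to run the convolution bound. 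The secondary point requiring care is the bookkeeping of residue classes modulo $Q_\psi$, checking that each factor, and hence each product, is supported in $\mathbb{N}_{0,Q_\psi}$, which is exactly the class appearing in the statement.
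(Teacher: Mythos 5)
Your proof is correct, and it shares the paper's starting point (evaluating the formula of Lemma \ref{general-partial-z} at $w=g(z)$, substituting \eqref{eq:partial G}, and dominating the iterates $g_m$ by $g$ via \eqref{eq:bounds for Ank}), but it concludes differently. The paper dominates the \emph{entire} $j$-th summand at once by $\textsc{coeff}_{[n+1]}\bigl[g(z)\,(\partial G/\partial w)(z,g(z))^{j-1}\bigr]=\textsc{coeff}_{[n+2-j]}\bigl[g(z)\psi'(g(z))^{j-1}\bigr]$, which is a single composition $H(g(z))$ with $H(w)=w\,\psi'(w)^{j-1}$, so one application of Lemma \ref{lemma:H_saltos} (with the residue $m=Q_\psi(j-1)+2-j$) finishes each summand with no further convolution argument. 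You instead bound each \emph{factor} separately by $\psi(g(z))-b_0$ and $z\psi'(g(z))$, apply Lemma \ref{lemma:H_saltos} to each, and then recombine via the elementary estimate $\sum_{i=1}^{n-1} i^{-3/2}(n-i)^{-3/2}=O(n^{-3/2})$, correctly noting that Meir--Moon is inapplicable since both factors decay like $n^{-3/2}$. Your route costs an extra (standard) convolution lemma not present in the paper but is arguably more robust, since it only needs an $O$-bound on each factor rather than an exact identification of the product as a single function of $g$; your bookkeeping of the residue classes modulo $Q_\psi$ (each factor supported in $\mathbb{N}_{0,Q_\psi}$ after the shift by $z$, hence the product as well) is also correct and matches the congruence checks the paper performs on $n+2-j$.
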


\begin{proof} Combining formulas \eqref{eq:iteration_g_k} and \eqref{eq:partial G} with Lemma \ref{general-partial-z}
	we have that	\begin{align*}
		\textsc{coeff}_{[n]}\left[\frac{\partial G_k}{\partial z}(z,g(z))\right] &= \sum_{j = 1}^{k}\textsc{coeff}_{[n]}\left[ \frac{\partial G}{\partial z}(z,g_{k-j}(z))\prod_{i=1}^{j-1}\frac{\partial G}{\partial w}(z,g_{k-i}(z))\right] \\
		&=  \sum_{j = 1}^{k}\textsc{coeff}_{[n]}\left[ \frac{g_{k+1-j}(z)}{z}\prod_{i=1}^{j-1}\frac{\partial G}{\partial w}(z,g_{k-i}(z))\right]\,.
	\end{align*}
	
	From  the inequalities \eqref{eq:bounds for Ank} we then deduce, for $n \ge 1$, that
	\begin{align*}
		\sum_{j = 1}^{k}\textsc{coeff}_{[n]}\left[\frac{g_{k+1-j}(z)}{z}\prod_{i=1}^{j-1}\frac{\partial G}{\partial w}(z,g_{k-i}(z))\right]   &\leq \sum_{j = 1}^{k}\textsc{coeff}_{[n+1]}\left[g(z)\frac{\partial G}{\partial w}(z,g(z))^{j-1}\right]\,.
	\end{align*}
	
	For  $1 \le j \le k$ we have, using \eqref{eq:partial G}, that
	\begin{align*}
		\textsc{coeff}_{[n+1]}\left[g(z)\frac{\partial G}{\partial w}(z,g(z))^{j-1}\right] = \textsc{coeff}_{[n+2-j]}\left[g(z)\psi^{\prime}(g(z))^{j-1}\right]\,.
	\end{align*}
	
	Write $H(w)=w \psi^\prime(w)^{j{-}1}$. Observe that $H$ is a nonconstant power series which can be written as $H(w) = w^{1+(Q_{\psi}-1)(j-1)}\tilde{H}(z^{Q_{\psi}}) = w^{Q_{\psi}(j-1)+2-j}\tilde{H}(z^{Q_{\psi}})$, for certain auxiliary power series $\tilde{H}$\,.
	
	Applying Lemma \ref{lemma:H_saltos} with $H(w)=w \psi^\prime(w)^{j{-}1}$ and $m = Q_{\psi}(j-1)+2-j$, we conclude that there exists a constant $E_j > 0$ such that
	\begin{align*}
		\textsc{coeff}_{[n+2-j]}\left[g(z)\psi^{\prime}(g(z))^{j-1}\right] \leq E_j \, \rho^{-n}n^{-3/2}\,, \quad \text{ as } n \rightarrow \infty \text{ and } n\in \mathbb{N}_{0,Q_{\psi}}. \,
	\end{align*}
	Observe that for integers $n \geq 1$ such that $n \equiv 0,\text{\hspace{-.16 cm}}\mod Q_{\psi}$ we have that $n+2-j \equiv m$,$\mod Q_{\psi}$.
	
	Then we conclude, with $D_k=\sum_{j=1}^k E_j$, that
	\begin{align*}
		\textsc{coeff}_{[n]}\left[\frac{\partial G_k}{\partial z}(z,g(z))\right] \le D_k \, \rho^{-n}n^{-3/2}\,, \quad \text{ as } n \rightarrow \infty \text{ and } n\in \mathbb{N}_{0,Q_{\psi}}\,.
\end{align*}	\end{proof}

For the coefficients of $(\partial G_k/\partial w)(z,g(z))$ we have:

\begin{lem} \label{lemma: bigOpartial-w} For any integer $k \geq 1$ we have
	\begin{align*}
		\textsc{coeff}_{[n]}\left[\frac{\partial G_k}{\partial w}(z,g(z))\right] = O(\rho^{-n} n^{-3/2})\,, \quad \text{ as } n \rightarrow \infty \text{ and } n\in \mathbb{N}_{0,Q_{\psi}}\,.
	\end{align*}
\end{lem}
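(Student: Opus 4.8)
The plan is to mirror the proof of Lemma \ref{lemma: bigOpartialz}, taking advantage of the fact that $\partial G_k/\partial w$ has an even simpler closed form than $\partial G_k/\partial z$: a single product of factors $\partial G/\partial w$, with no outer summation over $j$. First I would establish, by induction on $k$ using the chain rule applied to $G_k(z,w) = G(z, G_{k-1}(z,w))$, the product formula
\begin{align*}
	\frac{\partial G_k}{\partial w}(z,w) = \prod_{i=1}^{k}\frac{\partial G}{\partial w}(z, G_{k-i}(z,w))\,, \quad \text{for } |z| \leq \rho \text{ and } |w| \leq \tau\,.
\end{align*}
The base case $k=1$ is immediate since $G_0(z,w)=w$, and the inductive step follows at once from $\partial G_k/\partial w = (\partial G/\partial w)(z, G_{k-1})\cdot \partial G_{k-1}/\partial w$.

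Next I would evaluate this at $w = g(z)$. Using $G_{k-i}(z,g(z)) = g_{k-i}(z)$ from \eqref{eq:iteration_g_k} and $\partial G/\partial w(z,w) = z\psi^\prime(w)$ from \eqref{eq:partial G}, the formula becomes
\begin{align*}
	\frac{\partial G_k}{\partial w}(z,g(z)) = z^k \prod_{j=0}^{k-1}\psi^\prime(g_j(z))\,, \quad \text{where } g_0 = g\,.
\end{align*}
The key step is then a coefficientwise domination: by \eqref{eq:bounds for Ank} each iterate $g_j$ has nonnegative coefficients dominated by those of $g$, and since $\psi^\prime$ has nonnegative coefficients, both composition with $\psi^\prime$ and the formation of products preserve this domination. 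Hence $\prod_{j=0}^{k-1}\psi^\prime(g_j(z))$ is dominated coefficientwise by $\psi^\prime(g(z))^k$, giving
\begin{align*}
	\textsc{coeff}_{[n]}\left[\frac{\partial G_k}{\partial w}(z,g(z))\right] \leq \textsc{coeff}_{[n-k]}\left[\psi^\prime(g(z))^k\right]\,.
\end{align*}

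Finally I would invoke Lemma \ref{lemma:H_saltos} with $H(w) = \psi^\prime(w)^k$. This $H$ is nonconstant (since $\psi^{\prime\prime}(\tau) > 0$ forces $\psi^\prime$ nonconstant), has nonnegative coefficients, and has radius of convergence $R_\psi > \tau$. Writing $\psi(z) = \phi(z^{Q_\psi})$ one gets $\psi^\prime(z) = Q_\psi z^{Q_\psi - 1}\phi^\prime(z^{Q_\psi})$, so $\psi^\prime(w)^k = w^{m}\hat{H}(w^{Q_\psi})$ with $m \equiv -k \pmod{Q_\psi}$; thus $H$ satisfies the support hypothesis of Lemma \ref{lemma:H_saltos}. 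The lemma then produces a constant $C > 0$ with $\textsc{coeff}_{[N]}[\psi^\prime(g(z))^k] \leq C\rho^{-N}N^{-3/2}$ as $N \to \infty$ and $N \in \mathbb{N}_{m,Q_\psi}$. Taking $N = n-k$: when $n \in \mathbb{N}_{0,Q_\psi}$ we have $n - k \equiv -k \equiv m \pmod{Q_\psi}$, so the bound applies and $\textsc{coeff}_{[n-k]}[\psi^\prime(g(z))^k] \leq C\rho^k\rho^{-n}(n-k)^{-3/2} = O(\rho^{-n}n^{-3/2})$, which yields the claim. The main obstacle I anticipate is bookkeeping rather than conceptual: one must track the arithmetic-progression residue carefully so that the shifted index $n-k$ of the dominating function lands in the class $\mathbb{N}_{m,Q_\psi}$ required by Lemma \ref{lemma:H_saltos}, and one must justify the coefficientwise domination through both the composition with $\psi^\prime$ and the $k$-fold product. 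Both become routine once \eqref{eq:bounds for Ank} and the factorization $\psi(z) = \phi(z^{Q_\psi})$ are in hand.
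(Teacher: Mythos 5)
Your proposal is correct and follows essentially the same route as the paper's own proof: the chain-rule product formula $z^k\prod_{j=0}^{k-1}\psi'(g_j(z))$, coefficientwise domination by $\psi'(g(z))^k$ via \eqref{eq:bounds for Ank}, and an application of Lemma \ref{lemma:H_saltos} to $H(w)=\psi'(w)^k$ with the residue $m\equiv -k \pmod{Q_\psi}$ tracked exactly as the paper does.
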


\begin{proof} 
	Applying the chain rule to $G_k(z,w)$  and using \eqref{eq:iteration_g_k} and \eqref{eq:partial G}, we obtain that
	\begin{align}\label{eq:formula for partial Gw}
		\frac{\partial G_k}{\partial w}(z,g(z)) = \prod_{j = 0}^{k-1}\frac{\partial G}{\partial w}(z,G_j(z,g(z))) = z^{k}\prod_{j = 0}^{k-1}\psi^\prime(G_j(z,g(z)))=z^k \prod_{j = 0}^{k-1}\psi^\prime(g_j(z))\,,
	\end{align}
	for  $|z| \leq \rho$ and $|w| \leq \tau$.
	
	Appealing to \eqref{eq:bounds for Ank} we deduce that
	$$		\textsc{coeff}_{[n]}\left[\frac{\partial G_k}{\partial w}(z,g(z))\right] \le
	\textsc{coeff}_{[n-k]}(\psi^\prime(g(z))^k)\,.$$
	
	First observe that for integers $j \geq 0$ we have
	\begin{align}\label{eq: ineqpsipsi_prime}
		\textsc{coeff}_{[j]}\left(\psi^{\prime}(g_j(z))\right) = \sum_{n \geq 0}b_n \textsc{coeff}_{[j]}\left(g_j(z)^n\right) \leq \textsc{coeff}_{[j]}\left[\psi^{\prime}(g(z))\right]\,.
	\end{align}
	Here we use the formula for the coefficients of a finite product of powers series and also that $A_n^{(l)} \leq A_n$, for any pair of nonnegative integers $l \geq 0$ and $n \geq 1$.
	
	Combining the convolution formula with inequality (\ref{eq: ineqpsipsi_prime}) we conclude that 
	\begin{align*}
		\textsc{coeff}_{[n]}\left[\frac{\partial G_k}{\partial w}(z,g(z))\right] &= \textsc{coeff}_{[n-k]}\left[\prod_{j = 0}^{k-1}\psi^\prime(G_j(z,g(z)))\right] \\ &\leq \textsc{coeff}_{[n-k]}\left[\psi^\prime(g(z))^{k}\right].
	\end{align*}
	
	Denote $H(w) = \psi^{\prime}(w)^k$, this function is nonconstant; recall that $\psi \in \K^{\star}$ cannot be a polynomial of degree $1$. The power series $H$ can be written as $H(w) = w^{(Q_{\psi}-1)k}\tilde{H}(w^{Q_{\psi}})$, for certain auxiliary power series $\tilde{H}$ with nonnegative coefficients.

	In fact $H(w) = w^{m}(w^{Q_{\psi}(j(Q_{\psi}-1)+l-1)}\tilde{H}(w^{Q_{\psi}}))$ with $m = Q_{\psi}-l$ and $k = jQ_{\psi}+l$, for $j \geq 0$ and $l \in \{0,1,\dots,Q_{\psi}-1\}$. Observe that $m+k \equiv 0$,$\mod Q_{\psi}$.
	
	Applying Lemma \ref{lemma:H_saltos} with $m$ as in the previous paragraph we find that
	\begin{align} \label{eq: asymptotic-partial-w}
		\textsc{coeff}_{[n-k]}[\psi^\prime(g(z))^k] \sim C_{H,\psi} \, \rho^{-n}n^{-3/2}\,, \quad \text{ as } n \rightarrow \infty \text{ and } n \in \mathbb{N}_{0,Q_{\psi}}\,,
	\end{align}
	where $C_{H,\psi}$ is a positive constant. For integers $n \geq k$ such that $n \equiv 0$,$\mod Q_{\psi}$ we have that $n-k \equiv m$,$\mod Q_{\psi}$.
	
	Using equation (\ref{eq: asymptotic-partial-w}), we conclude that there exists a constant $C > 0$ such that
	\begin{align*}
		\textsc{coeff}_{[n]}\left[\frac{\partial G_k}{\partial w}(z,g(z))\right] \leq C \, \rho^{-n} n^{-3/2}\,, \quad \text{ as } n \rightarrow \infty \text{ and } n \in \mathbb{N}_{0,Q_{\psi}}\,.
	\end{align*}
\end{proof}

\begin{lem}\label{lemma: partialwnonzero} For any $k \ge 0$ we have
	\begin{align*}
		\frac{\partial G_k}{\partial w}(\rho,\tau) \neq 0.
	\end{align*}
\end{lem}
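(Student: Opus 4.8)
The plan is to evaluate the product formula for $\partial G_k/\partial w$ at the specific point $(\rho,\tau)$ and show that none of the factors vanishes. By formula \eqref{eq:formula for partial Gw} derived in the proof of Lemma \ref{lemma: bigOpartial-w}, we have the identity
\begin{align*}
	\frac{\partial G_k}{\partial w}(z,w) = \prod_{j=0}^{k-1}\frac{\partial G}{\partial w}(z,G_j(z,w)) = z^k\prod_{j=0}^{k-1}\psi'(G_j(z,w))\,,
\end{align*}
valid as an identity of analytic functions on the polydisk $|z|\le\rho$, $|w|\le\tau$. (Strictly, that proof recorded the specialization $w=g(z)$, but the chain-rule computation giving the product of $\partial G/\partial w$ factors holds for the general second variable $w$ as well, since $\partial G/\partial w(z,w)=z\psi'(w)$ does not reference $g$.) Thus it suffices to evaluate at $(\rho,\tau)$ and check each factor is nonzero.

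First I would handle the prefactor: $z^k$ at $z=\rho$ equals $\rho^k>0$, since $\rho=\tau/\psi(\tau)>0$. This disposes of the case $k=0$ trivially (the empty product gives $\partial G_0/\partial w\equiv 1$), and for $k\ge 1$ reduces the problem to showing each $\psi'(G_j(\rho,\tau))\neq 0$ for $0\le j\le k-1$.

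Next I would analyze the iterates $G_j(\rho,\tau)$. The key observation is that $G(z,w)=z(\psi(w)-b_0)$ has all nonnegative Taylor coefficients in both variables, so the real sequence $w_j\triangleq G_j(\rho,\tau)$ is built by applying a coefficientwise-positive map. One checks $G(\rho,\tau)=\rho(\psi(\tau)-b_0)=\tau-\rho b_0<\tau$ using $\rho\psi(\tau)=\tau$ and $b_0>0$, and more generally the bound \eqref{eq:Glesstau} gives $0\le w_j\le\tau$ for all $j$. I would then argue that each $w_j$ is strictly positive for $j\ge 0$: starting from $w_0=\tau>0$, positivity propagates because $G(\rho,w)=\rho(\psi(w)-b_0)>0$ whenever $w>0$, as $\psi(w)-b_0=\sum_{n\ge 1}b_n w^n>0$ for $w>0$ (recall $\psi\in\K^\star$ is nonconstant, so some $b_n>0$ with $n\ge 1$). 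Hence $w_j\in(0,\tau]$ for every $j$.

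Finally I would show $\psi'(w_j)\neq 0$ for $w_j\in(0,\tau]$. Since $\psi\in\K^\star$ has nonnegative coefficients and is not a degree-one polynomial, its derivative $\psi'(w)=\sum_{n\ge 1}nb_n w^{n-1}$ has nonnegative coefficients with at least one strictly positive coefficient attached to a positive power of $w$ (indeed the apex satisfies $\psi''(\tau)>0$, so $\psi'$ is genuinely nonconstant), whence $\psi'(w)>0$ for any $w>0$. In particular $\psi'(w_j)>0$ for all $j$ with $0\le j\le k-1$, since each $w_j>0$. Assembling the pieces, every factor in the product is strictly positive, so $\partial G_k/\partial w(\rho,\tau)=\rho^k\prod_{j=0}^{k-1}\psi'(w_j)>0$, which is in particular nonzero. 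The main obstacle, such as it is, lies in the bookkeeping that establishes strict positivity of the iterates $w_j$: one must be careful to use $b_0>0$ together with the presence of a higher-order term $b_n>0$ ($n\ge 1$) to guarantee $\psi(w)-b_0>0$ away from $w=0$, rather than merely nonnegativity, so that positivity is preserved under iteration rather than possibly collapsing to zero.
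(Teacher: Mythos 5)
Your proof is correct and follows essentially the same route as the paper: both evaluate the product formula $\partial G_k/\partial w(\rho,\tau)=\rho^k\prod_{j=0}^{k-1}\psi'(G_j(\rho,\tau))$ and show no factor vanishes. The only cosmetic difference is that you run a direct positivity induction on the iterates $w_j=G_j(\rho,\tau)=g_j(\rho)$, whereas the paper argues by contradiction (a vanishing factor would force $g_j\equiv 0$ and hence $g\equiv 0$); the underlying fact used is identical.
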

\begin{proof} Using \eqref{eq:formula for partial Gw} and that $g(\rho)=\tau$, we have that
	$$\frac{\partial G_k}{\partial w}(\rho,\tau)=\frac{\partial G_k}{\partial w}(\rho,g(\rho))=\rho^k \prod_{j=0}^{k{-}1} \psi^\prime(g_j(\rho))\,.$$
	
	Now,  $\psi^\prime(g_j(\rho))=0$ could only happen if $\psi^{\prime}(0) = 0$ and $g_j(\rho)=0$, which means $g_j \equiv 0$. The iteration scheme \eqref{eq:iteration scheme} then implies that $g_{j-1}\equiv 0$, and then that $g_{j{-}2}\equiv 0$ and, finally, that $g \equiv 0$, which is not the case.
	Thus $(\partial G_k/\partial w)(\rho,\tau) \neq 0$.
	
\end{proof}

\subsection{Proof of Theorem \ref{thm:mainthmlagrange}.}\label{section:proof_mainthmlagrange}

Fix $k \ge 1$. From \eqref{eq:iteration_g_k} we have that
\begin{align*}
	g^{\prime}_k(z) = \frac{\partial G_k}{\partial z}(z,g(z))+\frac{\partial G_k}{\partial w}(z,g(z))g^{\prime}(z)\,, \quad \text{ for all } |z| < \rho\,.
\end{align*}
We estimate now the coefficients of $g_k^\prime(z)$ using the expression above.

Lemma \ref{lemma: bigOpartialz} tells us that the first summand of $g_k^\prime(z)$ satisfies
\begin{align*}
	\textsc{coeff}_{[n]}\left[\frac{\partial G_k}{\partial z}(z,g(z))\right] = O(\rho^{-n}n^{-3/2})\,, \quad \text{ as } n \rightarrow \infty \text{ and } n \in \mathbb{N}_{0,Q_{\psi}}\,.
\end{align*}


To estimate the coefficients of the second summand we apply Corollary \ref{tauberian-corollary-Q} with $(\partial G_k/\partial w)(z,g(z))$ in the role of $B(z)$ and $g^\prime(z)$ in the role of $C(z)$ and $r=\rho$.

For $(\partial G_k/\partial w)(z,g(z))$, using Lemma \ref{lemma: bigOpartial-w}, we have that
$$(\star)\quad \textsc{coeff}_{[n]}\left[\frac{\partial G_k}{\partial w}(z,g(z))\right]=O\big(\rho^{-n} n ^{-3/2}\big)\, ,\quad \mbox{as $n \to \infty$} \text{ and } n \in \mathbb{N}_{0,Q_{\psi}}\,.$$
On the other hand, for $g^\prime(z)$ and because of formula \eqref{eq:formula OtterMeinMoon} we have that
$$(\star\star)\quad
\textsc{coeff}_{[n]}\left(g^\prime(z)\right) \sim C \rho^{-n} n^{-1/2}\, , \quad \mbox{as $n \to \infty$} \text{ and } n \in \mathbb{N}_{0,Q_{\psi}}\,, $$ for a positive constant $C$.

The estimates $(\star)$ and $(\star\star)$ combined with Lemma \ref{lemma: partialwnonzero} allow us to apply Corollary \ref{tauberian-corollary-Q}, with $\alpha=1/2$ and $\beta=3/2$, and deduce that
\begin{align*}
	\textsc{coeff}_{[n]}\left[\frac{\partial G_k}{\partial w}(z,g(z))g^{\prime}(z)\right] \sim \frac{\partial G_k}{\partial w}(\rho,\tau)\, (n+1)A_{n{+}1}\,, \quad \text{ as } n \rightarrow \infty \text{ and } n \in \mathbb{N}_{0,Q_{\psi}}\,.
\end{align*}

%

Therefore we conclude that
\begin{align*}
	\textsc{coeff}_{[n]}\left[g_k^{\prime}(z)\right] = (n+1) A_{n+1}^{(k)} \sim \frac{\partial G_k}{\partial w}(\rho,\tau)\, (n+1) A_{n+1}\,, \quad \text{ as } n \rightarrow \infty \text{ and } n \in \mathbb{N}_{0,Q_{\psi}}\,,
\end{align*}
which is equivalent to
\begin{align*}
	\lim_{\substack{n \to \infty; \\ n\in \mathbb{N}_{1,Q_\psi}}} \frac{A_n^{(k)}}{A_n}=\frac{\partial G_k}{\partial w}(\rho, \tau)\, .
\end{align*}

\

\begin{remark}
	Assume that $Q_{\psi} = 1$. For any integer $m \geq 0$, we can apply the asymptotic estimates of this section to the coefficients of the sequence $f_{\mathcal{I},m}(z) = S_{\mathcal{I},m}(z,g(z))$. 
	
	The power series expansions of $S_{\mathcal{I},m}$ and $G_m$ are given by:
	\begin{align*}
		S_{\mathcal{I},m}(z,w) = \sum_{l,s \geq 0}S_{l,s}^{(m)}z^{l}w^m\quad \text{ and } \quad G_m(z,w) = \sum_{l,s \geq 0}G_{l,s}^{(m)}z^{l}w^m \,,
	\end{align*}
	for all $|z| \leq \rho$ and $|w| \leq \tau$\,.
	
	By induction on $m \geq 0$ it follows that
	\begin{align}\label{eq:ineq_S_m_G_m}
		S_{l,s}^{(m)} \leq G_{l,s}^{(m)}\,, \quad \text{ for all } l,s\geq 0\,. 
	\end{align}
	Combining inequality (\ref{eq:ineq_S_m_G_m}) with Lemmas \ref{lemma: bigOpartialz} and \ref{lemma: bigOpartial-w} we find that
	\begin{align*}
		\textsc{coeff}_{[n]}\left[\frac{\partial S_{\mathcal{I},m}}{\partial z}(z,g(z))\right] \leq \textsc{coeff}_{[n]}\left[\frac{\partial G_m}{\partial z}(z,g(z))\right]= O(\rho^{-n}n^{-3/2})\,, \quad \text{ as } n \rightarrow \infty\,,
	\end{align*}
	and also that
	\begin{align*}
		\textsc{coeff}_{[n]}\left[\frac{\partial S_{\mathcal{I},m}}{\partial w}(z,g(z))\right] \leq \textsc{coeff}_{[n]}\left[\frac{\partial G_m}{\partial w}(z,g(z))\right] = O(\rho^{-n}n^{-3/2})\,, \quad \text{ as } n \rightarrow \infty\,.
	\end{align*}
	
	Applying the same strategy than in the case $\mathcal{I} = \{0\}$ we find that
	\begin{align*}
		\lim_{n \rightarrow \infty}\frac{ \, \, B_n^{(m)}}{A_n} = \frac{\partial S_{\mathcal{I},m}}{\partial w}(\rho,\tau)\,.
	\end{align*}	
\finremark
\end{remark}

\section{Trees}

\subsection{Rooted trees}  A finite rooted tree $a$ is a finite connected graph without cycles with a distinguished node called root.

The set of leaves (nodes of degree 1, other than the root) of $a$ is  denoted by $\text{leaves}(a)$; we also refer to $\text{leaves}(a)$ as the \textit{border} of $a$.

\subsubsection{Generations and distances on a rooted tree}

If $u,v$ are distinct nodes on a rooted tree, its distance $d(u,v)$ is the length  (number of edges) of the unique path (not repeating nodes) that connects $u$ and $v$. If $u=v$,  the distance is $d(u,v)=0$.

For integer $n \geq 0$, the $n$th generation of a rooted tree $a$ is the set of nodes at exactly distance $n$ from the root.  The $0$-th generation consists simply of the root.

For a node $u$ in the $n$th generation of $a$, the neighboring nodes in generation $n+1$ are called the \textit{descendants} of $u$. The \textit{outdegree} of a node $u$ of $a$ is the number of its descendants. The leaves are the nodes with no descendants.
The \textit{genealogy} of a node $v$ of $a$ is the set of nodes in $a$ which lie in the (unique) path connecting the root to $v$.

%

The \textit{distance $\partial(a)$ to the border} of the rooted tree $a$ is defined as
\begin{align*}
	\partial(a) = \min\{d(\text{root},u): u \in \text{leaves}(a)\}\,,
\end{align*}
in other terms, the distance to the border of the rooted tree $a$ is the generation of the leaf with the shortest genealogy. In turn, the height $h(a)$ of a rooted tree is
$$h(a)=\max\{d(\text{root},u): u \in \text{leaves}(a)\}\,.$$

The \textit{outdegree profile} of a rooted tree $a$ is the list of nonnegative integers $$(k_0(a), k_1(a),\dots)\,,$$
where $k_j(a)$ is the number of nodes of $a$ with outdegree $j$. Observe that if $a$ has $n$ nodes then $k_j(a)=0$, for $j\ge n$, and that $\sum_{j = 0}^{n-1}k_j(a) = n$ and $\sum_{j = 1}^{n-1}jk_j(a) = n-1$.

\subsubsection{Weight of a tree associated to $\psi \in \K$} Fix $\psi(z) = \sum_{n = 0}^{\infty}b_n z^n \in \K$. For any rooted tree $a$ with $n$ nodes, the $\psi$-weight $w_\psi(a)$ of $a$ (associated to $\psi$) is
\begin{align}\label{eq: weight}
	w_{\psi}(a) = \prod_{j = 0}^{\infty}b_j^{k_j(a)},
\end{align}
where $(k_0(a), k_1(a),\dots)$ is the outdegree profile of  $a$.  The product above is actually a finite product, since $k_j(a)$ are not 0 only for a finite number of $j$'s.
For details about these weights we refer to \cite[p. 999]{MeirMoonOld}.
\subsubsection{Plane trees} A rooted plane tree is a finite rooted tree where each descendant of a given node has an intrinsic label which records its position within the tree, these labels follow a lexicographical order, see \cite{Neveu} and \cite[p. 126]{Flajolet} for more details. 

The previous definition is equivalent to specifying an order on the descendants of each node from left to right, this order allows us to embed our tree in the plane. 

We denote by $\mathcal{G}$ the class of finite rooted plane trees and for any integer $n \geq 1$ we denote by $\mathcal{G}_n$ the subclass of rooted plane trees with $n$ nodes. Further,  for any $k \geq 0$, we denote $\mathcal{G}^{(k)} \subseteq \mathcal{G}$ the class of rooted plane trees $a$ with $\partial(a) \geq k$ and $\mathcal{G}_n^{(k)} = \mathcal{G}^{(k)} \cap \mathcal{G}_n$, the class of rooted plane trees with $n \geq 1$ nodes and $\partial \geq k$.

We consider also infinite rooted plane trees which have infinitely many nodes, but each of them has a finite number of descendants. For the precise definition of this class we refer to \cite{Neveu}.

\subsection{Galton-Watson processes}\label{subsec: G-W-sinKhinchin} Let $Y$ be a discrete random variable with expectation $m\triangleq \E(Y) \leq 1$ and such that
\begin{enumerate}
	\item $\P(Y = 0) > 0$, \\[-0.4 cm]
	\item $\P(Y = 0)+\P(Y = 1) < 1$.
\end{enumerate}

Denote by $\psi$ the probability generating function of $Y$. Conditions (1) and (2) imply that $\psi(0) > 0$ and also that $\psi$ is not a polynomial of degree $1$.

We use $Y$ as the offspring distribution of a Galton-Watson stochastic process $(W_n)_{n\ge 0}$, with $W_n$ being the random size of the $n$-generation.

We start with a unique individual in generation zero: $W_0\equiv 1$. This individual has a number of offsprings with probability  distribution given by $Y$. These offsprings comprise the first generation whose size is $W_1$ (which has the same distribution as $Y$). Each individual of the first generation has a random number offsprings with probability distribution $Y$, independently  of each other; these offsprings comprise in turn the second generation of random size $W_2$. And so on.

For i.i.d copies $(Y_{i,j})_{i, j \ge 1}$ of the variable $Y$ and for any integer $n \geq 1$ we have
\begin{align*}
	W_n = Y_{n,1}+\dots+Y_{n,{W_{n-1}}}, \quad \mbox{for $n \ge 1$}\, .
\end{align*}
Besides $W_0 \equiv 1$.

The discrete process $(W_n)_{n \geq 0}$ is the Galton-Watson process (from now on abbreviated GW) with offspring distribution given by $Y$. For each integer $n \geq 0$, the random variable $W_n$ gives the number of individuals in the $n$th generation of our process.

Since $m=\E(Y)\le 1$, the GW process with offspring distribution $Y$ extinguishes with probability $1$, that is, $\P(W_m = 0, \text{ for some } m ) = 1$. See, for instance, \cite{Williams}.
The condition $m \le 1$, which here we assume from the start, is referred in the literature as the subcritical ($m<1$) and critical ($m=1$) cases.
The total progeny of the GW process is given by the random variable
\begin{align}\label{eq: total_progeny_NKhinchin}
	W \triangleq \sum_{j = 0}^{\infty}W_j \stackrel{d}{=} 1+\sum_{j = 1}^{\infty}W_j.
\end{align}
Since the process extinguishes with probability 1, the total progeny $W$ is finite almost surely.

We denote the probability generating function of $W$ by $g$. The power series $g$ verifies Lagrange's equation with data $\psi$, that is,
\begin{align*}
	g(z) = z\psi(g(z)), \quad \text{ for all } z \in \D.
\end{align*}
See, for instance, \cite{Pitman} and \cite{Williams}.

For further  basic background on classical Galton-Watson processes we refer, for instance, to \cite{Pitman} and \cite{Williams}.

\subsubsection{Galton-Watson processes and rooted plane trees} Assume that $Y$ is a discrete random variable with $m = \E(Y) \leq 1$. Each realization of a GW process with offspring distribution $Y$ can be identified with a finite rooted plane tree.

Galton-Watson processes are introduced as a model for the evolution of a population where each individual is distinguishable from the other. In this model we record the moment of birth and the relation parent/child of each individual. To take care of this information, we impose an order on the descendants of each node: we label intrinsically from left to right, as we do for the rooted plane trees, the position of the descendants of each node. 

To generate a Galton-Watson process we use independent copies of a discrete random variable $Y$. We start with one individual with offspring distribution $Y$. The descendants of this individual are ordered from left to right. Using this order we assign copies of $Y$, that we denote $Y_{1,1}, Y_{1,2},\dots, Y_{1, Y}$, to each of the new individuals in the first generation of our process, then we iterate this construction. 

This correspondence between nodes and random variables consider, tacitly, that the descendants of each node are ordered from left to right, otherwise, we could not assign a specific random variable to a specific individual and, therefore, we could not record the number of descendants of a particular individual. If nodes are not distinguishable, that is, if the descendants of each individual are not ordered, then, in general, we cannot register if a given node at the left or a given node at the right has certain number of descendants: we cannot distinguish left and right and hence we cannot distinguish different individuals. 

We can think about this (subcritical or critical) Galton-Watson process as a random variable $T$ which takes values in the set $\mathcal{G}$ of finite rooted plane trees.


For any integer $n \geq 1$ and any rooted plane tree $a \in \mathcal{G}_n$ the $\psi$-weight $w_\psi(a)$, here $\psi$ denotes the probability generating function of $Y$, is the probability of the event $\{T=a\}$.
\begin{align*}
	\P(T = a) = \prod_{j = 0}^{\infty}\P(Y = j)^{k_j(a)}=w_\psi(a),
\end{align*}
where $(k_0(a), k_1(a),\dots)$ is the outdegree profile of the tree $a$. 

Denote by $\#(T)$ the random variable that gives the total progeny of the GW process $T$, then we have $\#(T) \stackrel{d}{=} W$. Observe that $\P(\#(T) = n) = \P(T \in \mathcal{G}_n)$, for all $n \geq 1$.

We define the random variable \textit{distance to the border of the Galton-Watson process} $T$ as $\partial(T)$. The random variable $\partial(T)$ is given by
\begin{align*}
	\P(\partial(T) = k) = \P(T \in \mathcal{G}^{(k)} \setminus \mathcal{G}^{(k+1)})\,.
\end{align*}

We aim in this paper to obtain,  for any integer $k \geq 0$ and any GW process $T$,  an asymptotic formula for the conditional probability
\begin{align*}
	\P(\partial(T) \geq k \, | \, \#(T) = n) = \P(T \in \mathcal{G}^{(k)} \, | \, \#(T) = n), \quad \mbox{as $n \rightarrow \infty$.}
\end{align*}

\subsection{Parametric families of Galton-Watson processes}
Let $\psi(z) = \sum_{n = 0}^{\infty}b_n z^n$ be a power series in $\K^\star$ with radius of convergence $R_\psi$ and apex $\tau \in (0, R_\psi)$, and denote by $(Y_t)_{t \in [0,R_{\psi})}$ its Khinchin family.

We let $g(z) = \sum_{n \geq 1}A_n z^n$ be the power series solution of Lagrange's equation with data $\psi$ which has radius of convergence $\rho=\tau/\psi(\tau)$, and denote by $(Z_t)_{t \in [0,\tau/\psi(\tau))}$ its associated  shifted Khinchin family. Observe that $Z_0 \equiv 1$.

\medskip

We consider the one-parameter family of GW processes $(T_t)_{t \in [0,R_{\psi})}$ where   $T_t$ is the GW process with offspring distribution given by $Y_t$. For $t=0$, the GW process degenerates, its total progeny is just  one individual (or in terms of trees, just the root).

For each $t \in [0,R_{\psi})$ we denote by $q(t)$, the probability of extinction for the GW process $T_t$. We have $q(t)=1$, for $t \in [0, \tau]$, since for $t$ in that interval,  $m_\psi(t)\le 1$. For $t \in (\tau,R_{\psi})$, we have that $0 <q(t)< 1$, see, for instance, \cite[p. 4]{Williams}. 

\medskip

For each $t \in (0,R_{\psi})$, we let $\psi_t(z) = \psi(tz)/\psi(t)$ be the probability generating function of the random variable $Y_t$; we set $\psi_0\equiv1$.

We consider, for each $t \in [0,R_{\psi})$, Lagrange's equation with data $\psi_t$ and the corresponding  power series solution $g_t(z)$:
\begin{align}\label{eq: Lagrange-probabilitygen}
	g_t(z) = z\psi_t(g_t(z)).
\end{align}

The case $t = 0$ is a degenerate case: $\psi_0(z) \equiv 1$ and $g_0(z) = z$.

\medskip

\begin{propo}\label{lemma: g_t-probgen} For $t \in [0,R_\psi)$ we have that
	\begin{align}\label{eq:identity for gt}
		g_t(z) = \frac{g(tz/\psi(t))}{t}, \quad \mbox{for all $t \in [0,R_{\psi})$ and $|z| \leq 1$}\,.
	\end{align}	
	The holomorphic function  $g_t$ is continuous on $\partial \D$ and $g_t(\D)\subset \D$.
	
	Besides, for $t \in [0,\tau]$, the power series $g_t(z)$ is the probability generating function of the random variable $Z_{t/\psi(t)}$.
\end{propo}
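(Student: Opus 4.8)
The plan is to identify $g_t$ with the explicit function $h(z) \triangleq g(tz/\psi(t))/t$ by showing that $h$ also solves Lagrange's equation with data $\psi_t$, and then to invoke the uniqueness of the power series solution. I fix $t \in (0,R_\psi)$ throughout (the value $t=0$ being the degenerate case, for which $g_0(z)=z$ matches the limit of the formula as $t\downarrow 0$). First I would check that $h$ is a genuine analytic function on $\D$ that extends continuously to $\overline{\D}$: by Lemma \ref{lemma: monotony_t/psi(t)} the quantity $t/\psi(t)$ is maximized at $t=\tau$ with value $\rho$, so $|tz/\psi(t)|\le t/\psi(t)\le\rho$ whenever $|z|\le 1$; since $g$ is analytic on $\D(0,\rho)$ and extends continuously to $\{|w|\le\rho\}$, the composition $h$ inherits these properties. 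This already delivers the holomorphy and the continuity on $\partial\D$ asserted in the second claim.

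Next I would verify the functional equation. Using $\psi_t(w)=\psi(tw)/\psi(t)$, the identity $t\,h(z)=g(tz/\psi(t))$, and Lagrange's equation $g(w)=w\psi(g(w))$ evaluated at $w=tz/\psi(t)$, a direct substitution gives
\[
z\,\psi_t(h(z)) = z\,\frac{\psi(g(tz/\psi(t)))}{\psi(t)} = z\,\frac{g(tz/\psi(t))}{(tz/\psi(t))\,\psi(t)} = \frac{g(tz/\psi(t))}{t}=h(z),
\]
together with $h(0)=g(0)/t=0$. Since the power series solution of Lagrange's equation with data $\psi_t$ that vanishes at the origin is unique (its coefficients are pinned down recursively, or via Lagrange's inversion formula as in the displayed formula for the $A_n$), I conclude $g_t=h$ as power series, hence as functions on $\{|z|\le 1\}$ by the identity theorem and continuity. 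This establishes the identity \eqref{eq:identity for gt}.

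For the inclusion $g_t(\D)\subset\D$ I would argue via the nonnegativity of the coefficients of $g_t$. For $|z|<1$ one has $|g_t(z)|\le g_t(|z|)<g_t(1)$, the last inequality being strict because $g_t$ has nonnegative coefficients and a nonzero linear term; so it suffices to check that $g_t(1)=g(t/\psi(t))/t\le 1$. Here the key point is that $g(t/\psi(t))$ is, by the inverse relation $g(s/\psi(s))=s$ valid on $[0,\tau]$, exactly the unique $s\in[0,\tau]$ with $s/\psi(s)=t/\psi(t)$. When $t\le\tau$ this forces $s=t$; when $t>\tau$, one has $t/\psi(t)<\rho$ strictly, so by the monotonicity in Lemma \ref{lemma: monotony_t/psi(t)} its preimage on $[0,\tau]$ lies on the increasing branch and satisfies $s<\tau<t$. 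In either case $g(t/\psi(t))=s\le t$, whence $g_t(1)\le 1$ and the inclusion follows. I expect this case distinction, and in particular controlling $g(t/\psi(t))$ for $t>\tau$, to be the main point requiring care, since for $t\le\tau$ the bound is immediate.

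Finally, for the third claim I would compute the probability generating function of the shifted Khinchin variable $Z_s$ directly: since $g(z)=\sum_{n\ge 1}A_n z^n$, one has $\E(z^{Z_s})=\sum_{n\ge1}(A_n s^n/g(s))\,z^n=g(sz)/g(s)$. Taking $s=t/\psi(t)$ and using $g(t/\psi(t))=t$, valid precisely for $t\in[0,\tau]$, yields $\E(z^{Z_{t/\psi(t)}})=g(tz/\psi(t))/t=g_t(z)$, which is the assertion. The only boundary subtlety is the endpoint $t=\tau$, where $s=\rho$ sits at the edge of the parameter interval; there $Z_\rho$ is still well defined because $g$ extends continuously to $\rho$ with $g(\rho)=\tau<\infty$, so $\sum_{n\ge1} A_n\rho^n=\tau$ normalizes a genuine probability law.
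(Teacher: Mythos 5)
Your proposal is correct and follows essentially the same route as the paper: identify $g_t$ with $h(z)=g(tz/\psi(t))/t$ by verifying Lagrange's equation with data $\psi_t$ and invoking uniqueness, then read off the probability generating function from $g(t/\psi(t))=t$ on $[0,\tau]$. The only differences are cosmetic: the paper gets continuity on $\partial\D$ by computing the radius of convergence $R_{g_t}=\rho/(t/\psi(t))\ge 1$ from the Otter--Meir--Moon asymptotics rather than from continuity of $g$ on $\{|w|\le\rho\}$, and your careful verification of $g_t(\D)\subset\D$ (including the case $t>\tau$, where $g_t(1)=q(t)<1$) fills in a step the paper leaves implicit.
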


\begin{proof} We write $h_t(z)= {g(tz/\psi(t))}/{t}$. Using that $g$ satisfies Lagrange's equation with data $\psi$, it is immediate to verify that $h_t$ satisfies Lagrange's equation with data $\psi_t$. Uniqueness of the solution gives that $g_t\equiv h_t$.
	
	Equation \eqref{eq:identity for gt} means that
	\begin{equation}\label{eq:power series for gt}g_t(z)=\sum_{n \ge 1} \frac{A_n t^{n{-}1}}{\psi(t)^n} z^n\, . \end{equation}
	
	The Otter-Meir-Moon formula \eqref{eq:formula OtterMeinMoon} and formula \eqref{eq:identity for gt} give that the radius of convergence of the power series $g_t$ is
	$$R_{g_t}=\frac{\tau/\psi(\tau)}{t/\psi(t)}\,,$$ and also the continuity of $g_t$ on $\partial \D$. Observe that
	$R_{g_t}>1$ for $t \neq \tau$, while $R_{g_\tau}=1$, see Lemma \ref{lemma: monotony_t/psi(t)}.
	
	Equation \eqref{eq: Lagrange-probabilitygen} is then satisfied for $|z|\le 1$, for all $t \in [0,R_{\psi})$.
	\medskip
	
	Now, for $t \in [0,\tau]$, since then $g(t/\psi(t))=t$, we deduce  that
	$$\begin{aligned}
		\P(Z_{t/\psi(t)}=n)&=\frac{A_n (t/\psi(t))^n}{g(t/\psi(t))}\\&=\frac{A_n (t/\psi(t))^n}{t}=\textsc{coeff}_{[n]}(g_t(z))\, , \quad \mbox{for $t \in [0,\tau]$ and $n \ge 1$}\, .\end{aligned}$$
	Therefore, as claimed,  $g_t$ is the probability generating function of $Z_{t/\psi(t)}$, for all $t \in [0, \tau]$.
\end{proof}

\begin{remark}
	The case $t = 0$ of this result needs of the convention $0^0 = 1$.  \finremark
\end{remark}

\medskip

Taking $z = 1$ in formula \eqref{eq: Lagrange-probabilitygen} we find that $g_t(1)$ is a fixed point for $\psi_t(z)$, that is,
\begin{align*}
	\psi_t(g_t(1)) = g_t(1), \quad \text{ for all }	t \in [0,R_{\psi}).
\end{align*}

From here we conclude that the probability of extinction $q(t)$ is given by
\begin{align}\label{eq: extinction}
	q(t) = g_t(1), \quad \text{ for all } t \in [0,R_{\psi}).
\end{align}
See \cite[p. 4]{Williams} for more details about this equality.

From formula (\ref{eq: Lagrange-probabilitygen}) we deduce that the probability of extinction function $q(t)$ maybe be expressed as
\begin{align}\label{eq: formula_extinction}
	q(t) = \sum_{n = 1}^{\infty}\frac{A_n t^{n-1}}{\psi(t)^n}, \quad \text{ for all } t \in [0,R_{\psi}),
\end{align}
with the convention that $0^0 = 1$. Recall that $q(t)=1$, for $t \in [0,\tau)$.

\medskip

\begin{coro}\label{lemma: progenydistribution} For each $t \in [0,\tau]$,  the power series $g_t$ is the probability generating function of $\# (T_t) $ and the following equality in distribution holds
	\begin{align*}
		\# (T_t) \stackrel{d}{=} Z_{t/\psi(t)}.
	\end{align*}
\end{coro}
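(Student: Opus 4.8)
The plan is to recognize $g_t$ as the probability generating function of the total progeny $\#(T_t)$ by invoking the general Galton--Watson fact recorded in Subsection~\ref{subsec: G-W-sinKhinchin}, and then to read off the equality in distribution directly from Proposition~\ref{lemma: g_t-probgen}. For each $t\in[0,\tau]$ the process $T_t$ has offspring distribution $Y_t$ with probability generating function $\psi_t(z)=\psi(tz)/\psi(t)$, and $\#(T_t)$ plays exactly the role of the random variable $W$ of that subsection; thus its generating function is the power-series solution of Lagrange's equation with data $\psi_t$. Since $g_t$ is by definition that solution, uniqueness will force the two to coincide.

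The one substantive point to settle is that $T_t$ really falls into the (sub)critical regime treated in Subsection~\ref{subsec: G-W-sinKhinchin}, so that $\#(T_t)$ is finite almost surely and its generating function is a genuine power series. Writing $\psi(z)=\sum_n b_n z^n$, I would check the three standing hypotheses on $Y_t$ for $t\in(0,\tau]$: first, $\P(Y_t=0)=b_0/\psi(t)>0$ because $b_0=\psi(0)>0$; second, $\P(Y_t=0)+\P(Y_t=1)=(b_0+b_1 t)/\psi(t)<1$, since $\psi\in\K^\star$ cannot be a polynomial of degree $1$ and hence some $b_n>0$ with $n\ge 2$; and third, $\E(Y_t)=m_\psi(t)\le 1$ throughout $[0,\tau]$, as $m_\psi$ is increasing with $m_\psi(\tau)=1$. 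Consequently $T_t$ extinguishes almost surely, $\#(T_t)$ is finite almost surely, and its generating function satisfies $g_t(z)=z\psi_t(g_t(z))$.

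With the hypotheses verified, uniqueness of the power-series solution of Lagrange's equation with data $\psi_t$ identifies the generating function of $\#(T_t)$ with $g_t$. Proposition~\ref{lemma: g_t-probgen} already asserts that, for $t\in[0,\tau]$, this same $g_t$ is the generating function of $Z_{t/\psi(t)}$ (the boundary value $t=\tau$, where $t/\psi(t)=\rho$, being covered there via the continuous extension of $g$). Since two nonnegative integer-valued random variables sharing a probability generating function are equal in distribution, we obtain $\#(T_t)\stackrel{d}{=}Z_{t/\psi(t)}$. The degenerate endpoint $t=0$ is handled directly: there $\psi_0\equiv 1$, $g_0(z)=z$, the process $T_0$ reduces to the root alone so $\#(T_0)\equiv 1$, and $Z_{0/\psi(0)}=Z_0\equiv 1$, so the identity holds trivially.

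I expect the only genuinely delicate step to be this verification that $T_t$ is (sub)critical for every $t\in[0,\tau]$, guaranteeing almost sure finiteness of $\#(T_t)$; once that is in place, the corollary is merely an assembly of the uniqueness of Lagrange's solution with Proposition~\ref{lemma: g_t-probgen}, and follows with no further computation.
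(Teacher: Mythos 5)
Your argument is correct and follows essentially the same route as the paper: the paper's proof likewise identifies the probability generating function of $\#(T_t)$ with the solution $g_t$ of Lagrange's equation with data $\psi_t$ and then invokes Proposition~\ref{lemma: g_t-probgen} to conclude $\#(T_t)\stackrel{d}{=}Z_{t/\psi(t)}$. You merely make explicit the verification (subcriticality of $T_t$ on $[0,\tau]$ and the standing hypotheses on $Y_t$) that the paper leaves implicit.
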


\begin{proof}
	For all $t \in [0,\tau]$, the power series $g_t$ is a probability generating function. Since $g_t$ is the solution of  \eqref{eq: Lagrange-probabilitygen} and $\psi_t$ is the probability generating function of $Y_t$, we have that $g_t$ is the probability generating function of the size of the progeny $\# (T_t)$.
\end{proof}

\begin{remark} For each $t \in (\tau,R_{\psi})$, $\#(T_t)$ is a proper random variable. In this case, each $\#(T_t)$ takes values in $\{1,2,3,\dots\} \cup \{\infty\}$, therefore $(\#(T_t))_{t \in [0,R_{\psi})}$ is a, proper, one-parameter family of random variables.
\finremark
\end{remark}

\subsection{The conditional random tree}

Let $\mathcal{R} \subset \mathcal{G}$ be a subclass of rooted plane trees.  For integers $n \ge 1$, denote by $\mathcal{R}_n$ the subclass of trees of $\mathcal{R}$ of size $n$.

Fix $\psi(z) = \sum_{n = 0}^{\infty}b_n z^n$ in $\K^\star$ with radius of convergence $R_\psi$ and apex $\tau \in (0,R_{\psi})$.
For  integer $n \ge 1$, we denote with $U_n$ the sum of the $\psi$-weights of the trees in the class $\mathcal{G}_n$ and, similarly, $V_n$ the sum of the $\psi$-weights of the trees in the class $\mathcal{R}_n$:
\begin{align}\label{eq: sum_weigh_partial_root}
	U_n = \sum_{a \in \mathcal{G}_n}w_\psi(a)\,, \quad \text{ and } \quad V_n = \sum_{a \in \mathcal{R}_n}w_\psi(a)\,,
\end{align}
see equation (\ref{eq: weight}) and also \cite{MeirMoonOld}.

Fix $t \in (0,\tau]$. For the GW process $T_t$ and any rooted plane tree $a \in \mathcal{G}_n$, the probability that $T_t$ is equal to $a$ is given by
\begin{equation}\label{eq: prob_GW_finito}
	\P(T_t = a) = \prod_{j = 0}^{n-1}\P(Y_t = j)^{k_j(a)} =\omega_{\psi_t}(a)=\prod_{j = 0}^{n-1}\frac{b_j^{k_j}t^{j k_j}}{\psi(t)^{k_j}} =w_\psi(a)\frac{t^{n-1}}{\psi(t)^n}\,.
\end{equation}
See, for instance, \cite{Neveu} or \cite{Otter}.

Thus,
\begin{equation}\label{eq: prob_GW_finito subclass}
	\P(T_t \in \mathcal{R}_n) = V_n\frac{t^{n-1}}{\psi(t)^n}\,,
\end{equation}
and, in particular,
\begin{equation}\label{eq: prob_GW_finito class}
	\P(T_t \in \mathcal{G}_n) = U_n\frac{t^{n-1}}{\psi(t)^n}\,,
\end{equation}

The next proposition follows from \eqref{eq: prob_GW_finito subclass} and \eqref{eq: prob_GW_finito class}.

\begin{propo}\label{thm: uniform} For $t \in (0,R_{\psi})$,  we have
	\begin{align*}
		\P(T_t \in \mathcal{R}  \, | \, \#(T_t) = n) = \frac{V_n}{U_n}\,,
	\end{align*}
	for any integer $n \geq 1$ such that $n \equiv 1,\text{\hspace{-.16 cm}}\mod Q_{\psi}$.
\end{propo}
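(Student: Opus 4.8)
The plan is to read the result off directly from the two weight identities already established, since no asymptotics are involved here: this is a purely algebraic statement valid for each fixed $n$. First I would unwind the definition of conditional probability,
\begin{align*}
	\P(T_t \in \mathcal{R} \mid \#(T_t) = n) = \frac{\P\big(\{T_t \in \mathcal{R}\} \cap \{\#(T_t) = n\}\big)}{\P(\#(T_t) = n)},
\end{align*}
and then identify the events that appear. The event $\{\#(T_t) = n\}$ is exactly the event $\{T_t \in \mathcal{G}_n\}$ that the realization $T_t$ is a plane tree with $n$ nodes, and intersecting it with $\{T_t \in \mathcal{R}\}$ yields $\{T_t \in \mathcal{R}_n\}$, because $\mathcal{R}_n = \mathcal{R} \cap \mathcal{G}_n$ by definition. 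Hence the quotient becomes $\P(T_t \in \mathcal{R}_n)/\P(T_t \in \mathcal{G}_n)$.

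The second step is the cancellation. Substituting the explicit expressions \eqref{eq: prob_GW_finito subclass} and \eqref{eq: prob_GW_finito class} gives
\begin{align*}
	\frac{\P(T_t \in \mathcal{R}_n)}{\P(T_t \in \mathcal{G}_n)} = \frac{V_n \, t^{n-1}/\psi(t)^n}{U_n \, t^{n-1}/\psi(t)^n} = \frac{V_n}{U_n},
\end{align*}
since the common factor $t^{n-1}/\psi(t)^n$, which is nonzero for $t \in (0,R_\psi)$, cancels in numerator and denominator. This is the entire computational content, and it is a one-line manipulation.

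The only point demanding care, and the reason for the restriction to $n \in \mathbb{N}_{1,Q_\psi}$, is the well-definedness of the conditioning: the denominator $\P(\#(T_t) = n) = U_n \, t^{n-1}/\psi(t)^n$ must not vanish. Since $t \in (0,R_\psi)$ forces $t > 0$ and $\psi(t) > 0$, positivity of the denominator reduces to $U_n > 0$. Comparing \eqref{eq: prob_GW_finito class} with the expansion \eqref{eq:power series for gt} of $g_t$ identifies $U_n$ with the Lagrange coefficient $A_n$, and the Otter--Meir--Moon dichotomy recalled in Section \ref{sec:Lagrange} shows that $A_n$ vanishes identically off the progression $\mathbb{N}_{1,Q_\psi}$, where all the mass of the total progeny is carried. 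Restricting to $n \equiv 1 \mod Q_\psi$ is therefore exactly what guarantees that $\{\#(T_t) = n\}$ is not a null event, so that both the conditional probability and the ratio $V_n/U_n$ are meaningful. I expect this well-definedness check to be the main (and only mild) obstacle; the equality itself is immediate.
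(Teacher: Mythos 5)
Your proof is correct and follows essentially the same route as the paper: rewrite the conditional probability as $\P(T_t\in\mathcal{R}_n)/\P(T_t\in\mathcal{G}_n)$ and cancel the common factor $t^{n-1}/\psi(t)^n$ using \eqref{eq: prob_GW_finito subclass} and \eqref{eq: prob_GW_finito class}. Your added discussion of why the restriction $n\equiv 1 \bmod Q_\psi$ makes the conditioning non-degenerate (via $U_n=A_n$ and the Otter--Meir--Moon dichotomy) is a reasonable supplement that the paper leaves implicit.
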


\begin{proof} For integers $n \equiv 1,\mod Q_{\psi}$, just observe that
	$$\P(T_t \in \mathcal{R}  \, | \, \#(T_t) = n)=\frac{\P(T_t \in \mathcal{R}_n)}{\P(T_t \in \mathcal{G}_n)}$$
	and use \eqref{eq: prob_GW_finito subclass} and \eqref{eq: prob_GW_finito class}.\end{proof}

\medskip

For $t \in (0, R_{\psi})$ and $n \ge 1$, we have that
$$\P(T_t \in \mathcal{G}_n)=\P(\# (T_t)=n)=\textsc{coeff}_{[n]}(g_t(z ))$$
and thus from formula \eqref{eq:power series for gt} we see that
$$\P(T_t \in \mathcal{G}_n)=\frac{A_n t^{n{-}1}}{\psi(t)^n}\,.$$
Comparing with \eqref{eq: prob_GW_finito class} we see that
$$A_n=U_n=\sum_{a \in \mathcal{G}_n}\omega_\psi(a)\,, \quad \mbox{for $n \ge 1$}\,.$$

\section{The distance to the border}\label{sec: distance_border}

Now we introduce a formula for the probability that a Galton-Watson process $T_t$, conditioned to have exactly $n$ nodes, has $\partial(T_t) \geq k$.

We recall here the notation $\mathcal{G}$ for the class of finite rooted plane trees and $\mathcal{G}^{(k)} \subseteq \mathcal{G}$ for the subclass of finite rooted plane trees with $\partial \geq k$.

For any $\psi \in \K$, we denote
\begin{align}\label{eq: sum_weigh_partial_root}
	A_n = \sum_{a \in \mathcal{G}_n}w(a), \quad \text{ and } \quad A_n^{(k)} = \sum_{a \in \mathcal{G}_n^{(k)}}w(a),
\end{align}
here $w(a)$ denotes the weight of $a$ with respect to a given $\psi \in \K$, see equation (\ref{eq: weight}).

For any integer $k \geq 1$ and any $t \in [0,\tau]$, the expression $A_n^{(k)}t^{n-1}/\psi(t)^n$ does not define a probability distribution. In particular, this expression does not define a Khinchin family: 
\begin{align*}
	\P(T_t \in \mathcal{G}^{(k)}) = \sum_{n \geq 1} \, \P(T_t \in \mathcal{G}_n^{(k)}) = \sum_{n \geq k} \, \frac{A_n^{(k)}t^{n-1}}{\psi(t)^n} <  \sum_{n \geq 1}\, \frac{A_n t^{n-1}}{\psi(t)^n} = 1.
\end{align*}
Here we use that for all $k \geq 1$, the class of rooted trees $\mathcal{G}_n^{(k)}$ is strictly contained in the class $\mathcal{G}_n$.

On the other hand for $k = 0$, we have the equality $A_n^{(0)} = A_n$, for all $n \geq 1$, therefore $\P(T_t \in \mathcal{G}^{(0)}) = \P(T_t \in \mathcal{G}) = 1$, for all $t \in [0,\tau]$.

In the previous reasoning we have used that, for any integer $k \geq 0$, we can partition the class of rooted plane trees with $\partial \geq k$ by its number of nodes:
\begin{align*}
	\mathcal{G}^{(k)} = \bigsqcup_{n = k}^{\infty} \mathcal{G}_n^{(k)} \subseteq \mathcal{G},
\end{align*}
where $\mathcal{G}_n^{(k)} \cap \mathcal{G}_m^{(k)} = \emptyset$, for any pair of integers $n,m \geq 1$ such that $n \neq m$.

\begin{lem}\label{lemma:uniform_galton} Fix $\psi \in \K$. For any integer $k \geq 0$, and any $t \in (0,R_{\psi})$, we have
	\begin{align*}
		\P(\partial(T_t) \geq k \, | \, \#(T_t) = n) = \P(T_t \in \mathcal{G}^{(k)}  \, | \, \#(T_t) = n) =  \frac{\, \, \,  A_n^{(k)}}{A_n},
	\end{align*}
	for all $n \in \mathbb{N}_{1,Q_{\psi}}.$
\end{lem}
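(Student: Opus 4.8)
The plan is to mimic the proof of Proposition \ref{thm: uniform}, since the statement is essentially its specialization to the subclass $\mathcal{R} = \mathcal{G}^{(k)}$ together with the identification of the weighted counts $A_n^{(k)}$ as conditional probabilities. First I would recall that for $t \in (0,R_\psi)$ and any rooted plane tree $a \in \mathcal{G}_n$, formula \eqref{eq: prob_GW_finito} gives $\P(T_t = a) = w_\psi(a)\,t^{n-1}/\psi(t)^n$. Summing this identity over the subclass $\mathcal{G}_n^{(k)} = \mathcal{G}^{(k)} \cap \mathcal{G}_n$ and using the definition \eqref{eq: sum_weigh_partial_root} of $A_n^{(k)}$ as $\sum_{a \in \mathcal{G}_n^{(k)}} w(a)$, I obtain
\begin{align*}
	\P(T_t \in \mathcal{G}_n^{(k)}) = A_n^{(k)}\,\frac{t^{n-1}}{\psi(t)^n}\,,
\end{align*}
which is exactly the analogue of \eqref{eq: prob_GW_finito subclass} with $V_n$ replaced by $A_n^{(k)}$ and $\mathcal{R} = \mathcal{G}^{(k)}$.

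Next I would write out the conditional probability. By the definition of $\partial(T_t)$ and the fact that $\{\partial(T_t) \geq k\}$ corresponds to the event $\{T_t \in \mathcal{G}^{(k)}\}$, for $n \equiv 1 \bmod Q_\psi$ we have
\begin{align*}
	\P(\partial(T_t) \geq k \mid \#(T_t) = n) = \P(T_t \in \mathcal{G}^{(k)} \mid \#(T_t) = n) = \frac{\P(T_t \in \mathcal{G}_n^{(k)})}{\P(T_t \in \mathcal{G}_n)}\,.
\end{align*}
Substituting the formula just derived in the numerator and \eqref{eq: prob_GW_finito class} together with the identity $A_n = U_n$ in the denominator, the common factor $t^{n-1}/\psi(t)^n$ cancels, leaving $A_n^{(k)}/A_n$. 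This is the desired conclusion, and it is precisely the specialization of Proposition \ref{thm: uniform} to $\mathcal{R} = \mathcal{G}^{(k)}$, so in fact I could simply invoke that proposition directly once the weighted-count identifications are in place.

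The one point requiring a little care is the restriction $n \in \mathbb{N}_{1,Q_\psi}$, which guarantees that the denominator $\P(T_t \in \mathcal{G}_n) = A_n\, t^{n-1}/\psi(t)^n$ is nonzero: for $\psi \in \K$ with period $Q_\psi$, the coefficient $A_n$ of the Lagrange solution $g$ vanishes unless $n \equiv 1 \bmod Q_\psi$, as recorded in Section \ref{sec:Lagrange}. For such $n$ the conditioning event $\{\#(T_t) = n\}$ has positive probability and the quotient is well defined. I expect no genuine obstacle here; the main (and only) subtlety is bookkeeping the periodicity condition so that the conditional probability is not a division by zero, and checking that the cancellation of $t^{n-1}/\psi(t)^n$ is legitimate for every $t \in (0,R_\psi)$, which it is since $0 < t/\psi(t)$ throughout that range.
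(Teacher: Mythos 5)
Your proposal is correct and follows exactly the paper's route: the paper proves this lemma by applying Proposition \ref{thm: uniform} with $\mathcal{R}_n = \mathcal{G}^{(k)}_n$, which is precisely the specialization you carry out (with the identifications $V_n = A_n^{(k)}$ and $U_n = A_n$ made explicit). Your additional remarks on the periodicity restriction $n \in \mathbb{N}_{1,Q_\psi}$ and the cancellation of $t^{n-1}/\psi(t)^n$ are consistent with the paper's setup and add no divergence.
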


The proof of this Lemma follows applying Theorem \ref{thm: uniform} with $\mathcal{R}_n = \mathcal{G}^{(k)}_n$.

\subsection{Asymptotic results}

Now we give a recurrence relation for the analytic function with coefficients $A_n^{(k)}$. Using this representation we give an asymptotic formula for the sequence $A_n^{(k)}$, as the number of nodes $n \rightarrow \infty$.

For any integer $k \geq 0$, we denote by $g_k(z)$ the analytic function with coefficients $A_n^{(k)}$, that is,
\begin{align*}
	g_k(z) = \sum_{n = k}^{\infty}A_n^{(k)}z^{n}, \quad \text{ for all } |z|\leq \rho \triangleq \tau/\psi(\tau).
\end{align*}

For all $t \in [0,R_{\psi})$ and $|z| \leq 1$, we have
\begin{align*}
	\frac{g_k(tz/\psi(t))}{t} = \sum_{n = k}^{\infty}\frac{A_n^{(k)}t^{n-1}}{\psi(t)^n}z^n = \sum_{n = k}^{\infty}\P(T_t \in \mathcal{G}_n^{(k)})z^n.
\end{align*}

\begin{lem} For any $\psi(z) = \sum_{n = 0}^{\infty}b_n z^n \in \K$ and any integer $k \geq 1$, the analytic function $g_k(z)$ verifies the recurrence relation
	\begin{align}\label{eq: recurrenceg_k}
		g_k(z) = z(\psi(g_{k-1}(z))-b_0),
	\end{align}
	where $g_0(z) = g(z)$. Here $g(z)$ denotes the solution of Lagrange's equation with data $\psi \in \K$.
\end{lem}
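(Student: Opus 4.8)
The plan is to read the recurrence directly off the recursive structure of rooted plane trees, using the multiplicativity of the weight $w_\psi$; this is a textbook application of the symbolic method, the only genuine content being the structural description of the class $\mathcal{G}^{(k)}$. The point of departure is the identity $g_k(z)=\sum_{a\in\mathcal{G}^{(k)}}w(a)\,z^{\#(a)}$, obtained by summing the definition $A_n^{(k)}=\sum_{a\in\mathcal{G}_n^{(k)}}w(a)$ over $n\ge k$; in particular $g_0=g$, since $\mathcal{G}^{(0)}=\mathcal{G}$ and $A_n=\sum_{a\in\mathcal{G}_n}w(a)$.

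The heart of the argument, which I would carry out first, is the combinatorial characterization of $\mathcal{G}^{(k)}$ for $k\ge 1$. If the root of a plane tree $a$ has outdegree $j$ with ordered subtrees $a_1,\dots,a_j$, then the leaves of $a$ are exactly the leaves of the $a_i$, each one generation farther from the root; hence $\partial(a)=1+\min_{1\le i\le j}\partial(a_i)$ when $j\ge 1$, while $\partial(a)=0$ when $j=0$. Consequently, for $k\ge 1$ the condition $\partial(a)\ge k$ is equivalent to the root having outdegree $j\ge 1$ together with $\partial(a_i)\ge k-1$ for every $i$. This gives a bijection between $\mathcal{G}^{(k)}$ and the ordered sequences $(a_1,\dots,a_j)$ with $j\ge 1$ and each $a_i\in\mathcal{G}^{(k-1)}$. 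It is elementary, but it is the only step where the distance-to-border condition actually enters.

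Next I would transport the bijection to generating functions. Under it $\#(a)=1+\sum_{i=1}^j\#(a_i)$, and from $w(a)=\prod_{l}b_l^{k_l(a)}$ together with $k_l(a)=\mathbf{1}_{\{l=j\}}+\sum_{i=1}^j k_l(a_i)$ one obtains the factorization $w(a)=b_j\prod_{i=1}^j w(a_i)$. Summing over the root outdegree $j\ge 1$ and over all choices of subtrees, which range independently over $\mathcal{G}^{(k-1)}$, yields
$$g_k(z)=z\sum_{j=1}^\infty b_j\Bigl(\sum_{a'\in\mathcal{G}^{(k-1)}}w(a')\,z^{\#(a')}\Bigr)^{\!j}=z\sum_{j=1}^\infty b_j\,g_{k-1}(z)^j=z\bigl(\psi(g_{k-1}(z))-b_0\bigr),$$
where the last equality uses $\psi(w)=\sum_{j\ge 0}b_j w^j$ and isolates the $j=0$ term $b_0$. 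This is precisely the asserted recurrence.

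Finally, I would record the routine analytic justification of these manipulations on the closed disk $\{|z|\le\rho\}$. Since $\mathcal{G}_n^{(k-1)}\subseteq\mathcal{G}_n$ and the weights are nonnegative, the coefficients satisfy $0\le A_n^{(k-1)}\le A_n$, so $g_{k-1}$ is dominated coefficientwise by $g$, which converges on $|z|\le\rho$. Hence all the series involved converge absolutely there, the interchange of summations in the decomposition is legitimate, and it suffices to verify the identity coefficient by coefficient, exactly as the tree bijection does. There is no serious obstacle here; the whole weight of the proof lies in establishing the clean structural description of $\mathcal{G}^{(k)}$.
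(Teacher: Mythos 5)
Your proof is correct and follows essentially the same route as the paper's: both rest on the decomposition of a tree in $\mathcal{G}^{(k)}$ into a root of outdegree $j\ge 1$ with ordered subtrees in $\mathcal{G}^{(k-1)}$, together with the multiplicativity of the $\psi$-weight, translated into the identity $g_k(z)=z(\psi(g_{k-1}(z))-b_0)$. Your write-up is in fact somewhat more explicit than the paper's (which argues via a diagram and the convolution formula for coefficients), but there is no substantive difference in method.
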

\begin{proof}
	We have that
	\begin{align*}
		\textsc{coeff}_{[n]}\left[z(\psi(g_{k-1}(z))-b_0)\right] &= \textsc{coeff}_{n-1}\left[\psi(g_{k-1}(z))-b_0\right] \\ &= \sum_{j = 1}^{n-1}b_j \textsc{coeff}_{n-1}[g_{k-1}(z)^j],
	\end{align*}
	therefore
	\begin{align*}
		\textsc{coeff}_{[n]}\left[z(\psi(g_{k-1}(z))-b_0)\right] = \sum_{j = 1}^{n-1}b_j \sum_{l_1+l_2+\dots+l_j = n-1}A_{l_1}^{(k-1)}A_{l_2}^{(k-1)}\dots A_{l_j}^{(k-1)}.
	\end{align*}
	
	For the family of rooted plane trees with $\partial \geq k$, that is, $\mathcal{G}^{(k)}$, we have:
	\begin{figure}[H]
		\begin{equation*}
			\mathcal{G}^{(k)} =  \ \ \treee{\mathcal{G}^{(k-1)}} \ \ + \treeC{\mathcal{G}^{(k-1)}} + \text{ \hspace{.2 cm}} \dots
		\end{equation*}
	\end{figure}
	In this diagram we fix a root and then we glue a tree with $\partial \geq k-1$, another root and then we glue 2 trees with $\partial \geq k-1$, and so on. By means of this process we build all the rooted plane trees with $\partial \geq k$.
	
	If we restrict our attention to the rooted plane trees with $\partial \geq k$ and $n$ nodes, the previous diagram should stop at certain point which depends on the value of $k$. For the rooted plane trees with $\partial \geq k$ and $n$ nodes, we allocate $n-1$ nodes in the trees which descend from the root: by means of this process we enumerate the class $\mathcal{G}^{(k)}_{n}$.
	
	If we glue $j$ rooted trees with $\partial \geq k-1$ to a root, then, when calculating the weights, we should weight the root using the coefficient $b_j$. The previous reasoning tell us that
	\begin{align*}
		\textsc{coeff}_{[n]}\left[z(\psi(g_{k-1}(z))-b_0)\right]  = \sum_{j = 1}^{n-1}b_j \sum_{l_1+l_2+\dots+l_j = n-1}A_{l_1}^{(k-1)}A_{l_2}^{(k-1)}\dots A_{l_j}^{(k-1)} =  A_n^{(k)}.
	\end{align*}
	
	See \cite{Flajolet} for similar arguments applied to combinatorial classes of rooted labeled trees or more generally to simple varieties of trees.
\end{proof}

Now, we give an asymptotic formula for the probability that a Galton-Watson process $T_t$, conditioned to have total progeny equal to $n$, has distance to the border $\partial(T_t) \geq k$, as the number of nodes $n \rightarrow \infty$.

\begin{theo}\label{thm: mainthmprobabilistic} Fix an integer $k \geq 0$ and a function $\psi \in \K^{\star}$ with $Q_{\psi} \geq 1$, then, for all $t \in (0,R_{\psi})$, we have
	\begin{align*}
		\lim_{\substack{n \to \infty; \\ n\in \mathbb{N}_{1,Q_\psi}}}\P(\partial(T_t) \geq k \, | \, \#(T_t) = n)  = \lim_{\substack{n \to \infty; \\ n\in \mathbb{N}_{1,Q_\psi}}}\frac{A_n^{(k)}}{A_n} = \frac{\partial G_k}{\partial w}(\rho,\tau).
	\end{align*}
\end{theo}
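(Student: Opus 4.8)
The plan is to assemble pieces that are already in place, since the genuine analytic work has been carried out in Theorem~\ref{thm:mainthmlagrange}; what remains is to connect the probabilistic statement to that coefficient asymptotics.

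First I would remove the conditioning and the dependence on the parameter $t$. By Lemma~\ref{lemma:uniform_galton}, for every $t \in (0,R_\psi)$ and every $n \in \mathbb{N}_{1,Q_\psi}$,
\[
\P(\partial(T_t) \geq k \mid \#(T_t) = n) = \frac{A_n^{(k)}}{A_n}.
\]
The key point is that the right-hand side carries no $t$: the conditioning cancels the parameter, exactly as in Proposition~\ref{thm: uniform}. This yields the first equality of the theorem and explains why the limit is the same for every $t \in (0,R_\psi)$, including the supercritical range $t \in (\tau,R_\psi)$ where $\#(T_t)$ can be infinite while $\{\#(T_t)=n\}$ still has positive probability.

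Next I would identify the combinatorial and analytic descriptions of $g_k$. The generating function $g_k(z) = \sum_{n\ge k} A_n^{(k)} z^n$ of the weighted classes $\mathcal{G}_n^{(k)}$ obeys the recurrence~\eqref{eq: recurrenceg_k}, namely $g_k(z) = z(\psi(g_{k-1}(z)) - b_0)$ with $g_0 = g$. This is verbatim the iteration scheme~\eqref{eq:iteration scheme} analyzed earlier, so by uniqueness of the coefficients propagated from $g_0=g$ by this functional relation the two families $A_n^{(k)}$ coincide. Consequently the quantity studied here is literally the one whose asymptotics Theorem~\ref{thm:mainthmlagrange} supplies, and invoking that theorem gives
\[
\lim_{\substack{n \to \infty; \\ n \in \mathbb{N}_{1,Q_\psi}}} \frac{A_n^{(k)}}{A_n} = \frac{\partial G_k}{\partial w}(\rho,\tau),
\]
closing the chain of equalities.

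I do not expect a real analytic obstacle at this stage, since the Tauberian estimates, the large-powers asymptotics, and the non-vanishing $\partial G_k/\partial w(\rho,\tau) \neq 0$ (Lemma~\ref{lemma: partialwnonzero}) were all dispatched inside the proof of Theorem~\ref{thm:mainthmlagrange}. The one point demanding care is the arithmetic along the residue class $\mathbb{N}_{1,Q_\psi}$: because $A_n^{(k)} = 0$ whenever $n \notin \mathbb{N}_{1,Q_\psi}$, the limit must be taken along that progression, and each of the three ingredients---the conditional-probability identity, the combinatorial recurrence, and the Lagrange asymptotics---is stated precisely for those indices. Matching these index restrictions consistently is the only thing that could go wrong, and it is a matter of aligning hypotheses rather than proving anything new.
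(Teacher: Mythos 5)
Your proposal is correct and follows exactly the paper's route: the paper's proof is precisely the combination of Lemma~\ref{lemma:uniform_galton} (which reduces the conditional probability to the $t$-free ratio $A_n^{(k)}/A_n$) with Theorem~\ref{thm:mainthmlagrange}, and your additional remark that the combinatorial recurrence \eqref{eq: recurrenceg_k} coincides with the iteration scheme \eqref{eq:iteration scheme} is the (implicit) identification the paper also relies on.
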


The proof of this Theorem follows combining Lemma \ref{lemma:uniform_galton} with Theorem \ref{thm:mainthmlagrange}. This result also appears in \cite{Protection1-mean}, the proof in there uses singularity analysis.

\begin{remark}
	For values of $t$ in the interval $(\tau, R_{\psi})$, we can also study the Galton-Watson process $T_t$ upon extinction. Take $a \in \mathcal{G}$, this is a finite rooted plane tree. Suppose that $a$ has exactly $n \geq 1$ nodes, then we have
	\begin{align}\label{eq: prob_extinction_conditionated}
		\P(T_t = a \, | \, \text{extinction})= \frac{\omega_{\psi}(a)t^{n-1}}{\psi(t)^n}\frac{1}{q(t)}.
	\end{align}
	
	Formula (\ref{eq: prob_extinction_conditionated}) is valid for all $t \in (0,R_{\psi})$, and, of course, we have
	\begin{align*}
		\P(T_t = a) = 	\P(T_t = a \, | \, \text{extinction})= \frac{\omega_{\psi}(a)t^{n-1}}{\psi(t)^n}, \quad \text{ for all } t \in (0,\tau].
	\end{align*}
	Here we use that $q(t) = 1$, for all $t \in [0,\tau]$.
	
	For each $t \in (0,R_{\psi})$ we denote $\tilde{T}_t$, the Galton-Watson process $T_t$ conditioned upon extinction. If we condition $\tilde{T}_t$ to have total progeny $n \geq 1$, then we find that
	\begin{align*}
		\P(\tilde{T}_t \in \mathcal{R} \,  |  \, \#(\tilde{T}_t) = n) = \frac{V_n}{U_n},
	\end{align*}
	for all $n \in \mathbb{N}_{1,Q_{\psi}}$.
	
	From here, and depending on the restrictions we impose to the class $\mathcal{R}$, we can find an asymptotic formula for this probability, as the number of nodes $n \rightarrow \infty$.
\finremark	
\end{remark}

\section{Applications}\label{sec: applications}

In this section we apply Theorem \ref{thm: mainthmprobabilistic} to some specific classes of rooted trees. By means of this result, we give asymptotic formulas for the rooted labeled Cayley trees (Poisson offspring distribution), rooted plane trees (geometric offspring distribution) and rooted binary plane trees (Bernoulli $\{0,2\}$ offspring distribution), with distance to the border bigger or equal than $k \geq 0$, as the number of nodes $n$ escapes to infinity.

For the study of the height we refer to \cite{Renyi}, \cite{de Bruijn}, and \cite{FlajoletOdlyzko} for the Cayley, plane and binary case respectively. 

\subsection{Rooted labeled Cayley trees} The class of rooted labeled Cayley trees with $n$ nodes $\mathcal{T}_n$ is a family of rooted trees where the descendants of each node are not ordered. This is a family of labeled trees, that is, for each rooted labeled Cayley tree with $n$ nodes there is a bijective correspondence between the set of nodes and the set of labels $\{1,2,\dots,n\}$.

The main property of this family of trees is that the trees in this class are not embedded in the plane. For the specific details about this family of trees we refer, for instance, to \cite[pp. 126-127]{Flajolet}.

The rooted labeled Cayley trees form a simple variety of trees, see, for instance \cite[pp. 126-127]{Flajolet}. A consequence of this fact is that the number of rooted labeled Cayley trees with $n$ nodes might be counted using Lagrange's equation with data $\psi(z) = e^z$.

The Cayley tree function $T(z)$, the exponential generating function for the counting sequence of this family of trees, satisfies Lagrange's equation
\begin{align}\label{eq: Lagrange-Cayley}
	T(z) = ze^{T(z)}, \quad \text{ for all } |z| \leq \frac{1}{e}.
\end{align}

Applying Lagrange's inversion formula, we find that
\begin{align}\label{eq: Cayley tree function}
	T(z) = \sum_{n = 1}^{\infty}\frac{n^{n-1}}{n!}z^n, \quad \text{ for all } |z| \leq \frac{1}{e}.
\end{align}

We want to apply Theorem \ref{thm: mainthmprobabilistic} to this particular family of rooted labeled trees. Each of the elements of the Khinchin family associated to $\psi(z) = e^z$, that we denote $(Y_t)_{t \in [0,\infty)}$, follows a Poisson distribution with parameter $t \in [0,\infty)$. See \cite{K_uno}, \cite{K_exponentials} and \cite{K_dos} for more examples of Khinchin families.

The mean function is given by
\begin{align*}
	m_{\psi}(t) = \frac{t\psi^{\prime}(t)}{\psi(t)}= t, \quad \text{ for all } t \in [0,\infty),
\end{align*}
in fact the equation $m_{\psi}(t) = 1$ has solution $\tau = 1.$

In this case the bivariate function $G(z,w)$ is given by
\begin{align*}
	G(z,w) = z(e^{w}-1),
\end{align*}
for all $|z| \leq 1/e$ and $|w| \leq 1$.

For any pair of integers $k \geq 0$ and $n \geq 1$, we denote $t_n^{(k)}$ the counting sequence for the rooted labeled Cayley trees with $n$ nodes and $\partial \geq k$. Besides, we denote $t_n$ the counting sequence for the rooted labeled Cayley trees with $n$ nodes. Formula (\ref{eq: Cayley tree function}) tell us that $t_n = n^{n-1}$, this is Cayley's formula for the enumeration of the rooted labeled Cayley trees. 

Now, applying Lemma \ref{lemma:uniform_galton}, we find that for any integer $k \geq 0$, and as $n \rightarrow \infty$, the proportion that trees in $\mathcal{T}_n^{(k)}$ do occupy in $\mathcal{T}_n$ is given by the limit
\begin{align*}
	\lim_{n \rightarrow \infty}\frac{\, \, \, \, t_n^{(k)}}{t_n} = \frac{\partial G_k}{\partial w}(1/e,1)\,.
\end{align*}

This asymptotic formula gives the answer to a question posed in \cite{Ara-Fer}, for an alternative proof of this result when $k = 2$ and $k = 3$, see \cite[pp. 309-312]{Ara-Fer}.

We denote by $c_k \triangleq \lim_{n \rightarrow \infty}t_n^{(k)}/t_n$. Applying the chain rule to $G_k(z,w)$ we obtain the following recurrence relation:
\begin{align}\label{eq: recucrence_k_cayley}
	c_k =\frac{1}{e}e^{G_{k-1}(1/e,1)}c_{k-1}, \quad \text{ for all } k \geq 1,
\end{align}
with $c_0 = 1$.

Let's study some particular instances of this formula.

\begin{itemize}
	\item[\scalebox{0.6}{$\blacksquare$}] The case $k =2$. Using the recurrence relation (\ref{eq: recucrence_k_cayley}), we find that
	\begin{align}\label{eq: c_2 Cayley}
		c_2 = \frac{1}{e}e^{G(1/e,1)}c_0 = \frac{1}{e}e^{1-1/e} = e^{-1/e}.
	\end{align}
	This result appears in \cite[p. 312]{Ara-Fer}.
	\vspace{.2 cm}
	
	\item[\scalebox{0.6}{$\blacksquare$}] The case $k = 3$. Using again our recurrence relation, see equation (\ref{eq: recucrence_k_cayley}), we find that
	\begin{align*}
		c_3 = \frac{1}{e}e^{G_2(1/e,1)}c_2.
	\end{align*}
	
	We have
	\begin{align*}
		G_2(1/e,1) = G(1/e,G(1/e,1)) = G(1/e,1-1/e) = \frac{1}{e}(e^{1-1/e}-1),
	\end{align*}
	and $c_2 = e^{-1/e}$, see equation (\ref{eq: c_2 Cayley}), therefore
	\begin{align*}
		c_3 = \frac{1}{e}e^{-1/e}e^{(e^{1-1/e}-1)/e}.
	\end{align*}
	This is Theorem 3.3, in \cite[p. 309]{Ara-Fer}.
	\vspace{.2 cm}
	
	\item[\scalebox{0.6}{$\blacksquare$}] The case $k = 4$. In this case we have
	\begin{align*}
		c_4 = \frac{1}{e}\exp\left(\frac{1}{e}(e^{(e^{1-1/e}-1)/e}-1)\right)\frac{1}{e}e^{-1/e}e^{(e^{1-1/e}-1)/e}.
	\end{align*}
\end{itemize}

In general, for any $k \geq 4$, we might continue iterating the recurrence relation (\ref{eq: recucrence_k_cayley}) and therefore find any particular value of $c_k$. This means that, for any $k \geq 1$, the constant $c_k$ is computable by iteration.

The recurrence relation (\ref{eq: recucrence_k_cayley}) is the answer to a question posed in \cite[p. 312]{Ara-Fer}. Some of these quantities appear in \cite{Protection1-mean}.

\subsection{Rooted plane trees} The rooted plane trees form a simple variety of trees, that is, we can count these trees by means of Lagrange's equation with data $\psi(z) = 1/(1-z)$.

The generating function for this family of rooted trees, that we denote $P(z)$, verifies Lagrange's equation with data $\psi$:
\begin{align}\label{eq: Lagrange-plane}
	P(z) = \frac{z}{1-P(z)}, \quad \text{ for all } |z| \leq \frac{1}{4}.
\end{align}

Applying Lagrange's inversion formula, we find that
\begin{align}\label{eq: generating_plane}
	P(z) = \sum_{n = 1}^{\infty}C_{n-1}z^n, \quad \text{ for all } |z| \leq \frac{1}{4},
\end{align}
where $C_n$ denotes the $n$th Catalan number.

We want to apply Theorem \ref{thm: mainthmprobabilistic} to the family of rooted plane trees. Each of the elements of the Khinchin family associated to $\psi(z) = 1/(1-z)$, that we denote $(Y_t)_{t \in [0,1)}$, follows a geometric distribution with parameter $1-t$. See \cite{K_uno}, \cite{K_exponentials} and \cite{K_dos} for examples of Khinchin families and for more details about this specific family of random variables.

The mean function of this Khinchin family is
\begin{align*}
	m_{\psi}(t) = \frac{t}{1-t}, \quad \text{ for all } t \in [0,1),
\end{align*}
in fact the equation $m_{\psi}(t) = 1$ has solution $\tau = 1/2$.

For $\psi(z) = 1/(1-z)$ we have that
\begin{align*}
	G(z,w) = z\left(\frac{1}{1-w}-1\right) = \frac{zw}{1-w},
\end{align*}
for all $|z| \leq 1/4$ and $|w| \leq 1/2$.

In this particular case it is possible to find a closed expression for the $k$-th iterate of $G(z,w)$, this is the content of the following Lemma.
\begin{lem} For any $k \geq 1$, the $k$-th iterate of $G(z,w)$ verifies that
	\begin{align}\label{eq: formula_plane_iterate}
		G_k(z,w) = \frac{z^k w}{1-\frac{1-z^k}{1-z}w}, \quad \text{ for all } |z| \leq 1/2, |w| \leq 1/4.
	\end{align}
\end{lem}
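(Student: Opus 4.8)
The plan is to prove formula \eqref{eq: formula_plane_iterate} by induction on $k \ge 1$, using that each iterate is produced from the previous one by a single application of $G(z,\cdot)$ in the second variable, together with the explicit shape $G(z,w)=zw/(1-w)$. The base case is immediate: for $k=1$ the proposed right-hand side reads $zw/(1-\tfrac{1-z}{1-z}w)=zw/(1-w)$, which is exactly $G(z,w)$, so $G_1=G$ as required.

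For the inductive step I would assume the formula holds for some $k\ge 1$, set $u\triangleq G_k(z,w)$, and use the recursive definition of the iterate to write $G_{k+1}(z,w)=G(z,u)=zu/(1-u)$. Substituting the inductive hypothesis $u=z^k w/\big(1-\tfrac{1-z^k}{1-z}w\big)$ and clearing the inner fraction, the numerator becomes $z^{k+1}w$ while the denominator becomes $1-\tfrac{1-z^k}{1-z}w-z^k w$.

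The one step that requires a little care is the simplification of this denominator, and it rests entirely on the telescoping (geometric-sum) identity
\[
\frac{1-z^k}{1-z}+z^k=\frac{1-z^k+z^k(1-z)}{1-z}=\frac{1-z^{k+1}}{1-z}.
\]
With this, the denominator collapses to $1-\tfrac{1-z^{k+1}}{1-z}w$, so that $G_{k+1}(z,w)=z^{k+1}w/\big(1-\tfrac{1-z^{k+1}}{1-z}w\big)$, which is precisely \eqref{eq: formula_plane_iterate} with $k$ replaced by $k+1$. This closes the induction.

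Finally I would record that these manipulations are genuine identities of analytic functions, not merely formal ones: the a priori bound \eqref{eq:Glesstau} ensures that the iterates $G_j(z,w)$ remain in the region where $G(z,\cdot)$ is defined, so the denominators $1-G_j(z,w)$ never vanish on the relevant domain and the rational expressions above are legitimate. I do not anticipate any serious obstacle here; the entire content of the argument is the one-line geometric identity displayed above, and the remainder is bookkeeping of numerators and denominators.
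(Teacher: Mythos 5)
Your proof is correct and follows exactly the route the paper intends: the paper merely states that the lemma ``follows by induction on $k \geq 1$,'' and your argument supplies the details of that induction, with the geometric-sum identity $\frac{1-z^k}{1-z}+z^k=\frac{1-z^{k+1}}{1-z}$ doing the work in the inductive step. The closing remark that \eqref{eq:Glesstau} keeps the iterates in the region where the denominators do not vanish is a welcome extra precaution, not present in the paper.
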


This Lemma follows by induction on $k \geq 1$. Applying this result we will be able to provide an explicit expression for the asymptotic formula given by Theorem \ref{thm: mainthmprobabilistic}. This is a particularity of this example.

We cannot find a closed expression, in the spirit of equation (\ref{eq: formula_plane_iterate}), for the family of rooted labeled Cayley trees. See the recurrence relation (\ref{eq: recucrence_k_cayley}).

For any pair of integers $k \geq 0$ and $n \geq 1$ we denote $P_n^{(k)}$ the counting sequence for the rooted plane trees with $n$ nodes and $\partial \geq k$. Besides, we denote $P_n$ the counting sequence for the rooted plane trees with $n$ nodes. Equation (\ref{eq: generating_plane}) gives that $P_n = C_{n-1}$. This formula follows applying Lagrange's inversion formula to equation (\ref{eq: Lagrange-plane}).

Finally, applying Theorem \ref{thm: mainthmprobabilistic} to the family of rooted plane trees, we find the following result
\begin{theo} For any integer $k \geq 0$, and as $n \rightarrow \infty$, the proportion that trees in $\mathcal{G}_n^{(k)}$ do occupy in $\mathcal{G}_n$ is given by the limit
	\begin{align*}
		\lim_{n \rightarrow \infty}\frac{\, \, \, \,  P_n^{(k)}}{P_n} = \frac{\partial G_k}{\partial w}(1/4,1/2) = 9\frac{4^k}{(2+4^k)^2}\,.
	\end{align*}
\end{theo}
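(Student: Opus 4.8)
The plan is to split the claimed chain of equalities into two independent pieces: the probabilistic limit on the left, and the explicit arithmetic evaluation on the right. The first is a black-box consequence of the general theory already developed, and the second is a finite calculation powered by the closed form \eqref{eq: formula_plane_iterate}.

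First I would dispatch the equality
\[
	\lim_{n \rightarrow \infty}\frac{P_n^{(k)}}{P_n} = \frac{\partial G_k}{\partial w}(1/4,1/2)
\]
as a direct application of Theorem \ref{thm: mainthmprobabilistic} to the data $\psi(z) = 1/(1-z)$. The only things to verify are the hypotheses: this $\psi$ belongs to $\K^\star$ (all its Taylor coefficients equal $1$, so $Q_\psi = 1$), its mean function $m_\psi(t) = t/(1-t)$ attains the value $1$ at the apex $\tau = 1/2 \in (0,1)$, and consequently $\rho = \tau/\psi(\tau) = 1/4$. Since every rooted plane tree carries $\psi$-weight equal to $1$ under this choice, the weighted sums $A_n$ and $A_n^{(k)}$ coincide with the counting sequences $P_n = C_{n-1}$ and $P_n^{(k)}$; hence Lemma \ref{lemma:uniform_galton} combined with Theorem \ref{thm: mainthmprobabilistic} yields the limit $(\partial G_k/\partial w)(\rho,\tau) = (\partial G_k/\partial w)(1/4,1/2)$.

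For the second equality the engine is the closed form \eqref{eq: formula_plane_iterate}. Writing $a = z^k$ and $b = (1-z^k)/(1-z)$, so that $G_k(z,w) = aw/(1-bw)$, a one-line quotient-rule computation collapses neatly:
\[
	\frac{\partial G_k}{\partial w}(z,w) = \frac{a(1-bw) + abw}{(1-bw)^2} = \frac{z^k}{\left(1 - \frac{1-z^k}{1-z}\,w\right)^2}.
\]
The cancellation of the $-abw$ and $+abw$ terms in the numerator is exactly what leaves the clean closed expression. It then remains to substitute $z = 1/4$, $w = 1/2$. Here I would first evaluate the bracketed denominator: with $z^k = 4^{-k}$ and $1-z = 3/4$ one finds
\[
	1 - \frac{1-z^k}{1-z}\,w = 1 - \frac{2(1-4^{-k})}{3} = \frac{1 + 2\cdot 4^{-k}}{3},
\]
so the derivative equals $9\cdot 4^{-k}/(1+2\cdot 4^{-k})^2$; multiplying numerator and denominator by $4^{2k}$ turns this into $9\cdot 4^{k}/(2+4^{k})^2$, as claimed.

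I do not anticipate any genuine obstacle here: the first equality is a direct invocation of the main theorem and the second is a routine finite computation. The only points demanding care are confirming that $\psi(z)=1/(1-z)$ satisfies the hypotheses of Theorem \ref{thm: mainthmprobabilistic} and the preceding inductive proof of the closed form \eqref{eq: formula_plane_iterate} — which is where all the real structure of this example is concentrated, since it is the existence of such an explicit $G_k$ (unavailable, for instance, in the Cayley case) that lets the asymptotic constant be written in closed form rather than merely computed by iteration.
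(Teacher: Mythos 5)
Your proposal is correct and follows essentially the same route as the paper: the paper likewise obtains the limit by applying Theorem \ref{thm: mainthmprobabilistic} to $\psi(z)=1/(1-z)$ (with $\tau=1/2$, $\rho=1/4$, $Q_\psi=1$, and all $\psi$-weights equal to $1$ so that $A_n=P_n$ and $A_n^{(k)}=P_n^{(k)}$), and then evaluates $\tfrac{\partial G_k}{\partial w}(1/4,1/2)$ via the closed form \eqref{eq: formula_plane_iterate}. Your explicit quotient-rule computation and the simplification to $9\cdot 4^k/(2+4^k)^2$ merely fill in arithmetic that the paper leaves implicit, and both are correct.
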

This result also appears in \cite{Protection-Gisang}.

\subsection{Rooted binary plane trees} Now we study the rooted binary plane trees, these are rooted plane trees where each node can only have 0 or 2 descendants. We can count these trees using Lagrange's equation with data $\psi(z) = 1+z^2$. 

The generating function for the rooted binary plane trees is
\begin{align*}
	B(z) = \sum_{n = 0}^{\infty}C_{n}z^{2n+1}, \quad \text{ for all } |z| \leq 1/2,
\end{align*}
where $C_n$ denotes the $n$th Catalan number. The generating function for the rooted binary trees with $\partial \geq k$ is
\begin{align*}
	B_k(z) = G_k(z,B(z)) = \sum_{n = 1}^{\infty}B_n^{(k)}z^{2n+1}, \quad |z| \leq 1/2.
\end{align*}

Now we write
\begin{align*}
	G(z,w) = zw^2, \quad \text{ for all } |z| \leq 1/2, |w| \leq 1. 
\end{align*}

We can find a closed expression for the iterate $G_k(z,w)$. This is the content of the following lemma. 

\begin{lem} For any $k \geq 0$, we have
	\begin{align*}
		G_k(z,w) = z^{2^k-1}w^{2^k}, \quad \text{ for all } |z| \leq 1/2, |w| \leq 1.
	\end{align*}
\end{lem}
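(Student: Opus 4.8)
The plan is to establish the closed form by a straightforward induction on $k \geq 0$, relying solely on the recursive definition $G_{k+1}(z,w) = G(z, G_k(z,w))$, the base convention $G_0(z,w) = w$, and the explicit shape $G(z,w) = zw^2$ of the map attached to $\psi(z) = 1 + z^2$ (here $\tau = 1$ and $\rho = \tau/\psi(\tau) = 1/2$).

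First I would dispatch the base case $k = 0$: since $z^{2^0-1}w^{2^0} = z^{0}w^{1} = w$, the proposed formula reduces exactly to the convention $G_0(z,w) = w$. For the inductive step, assuming $G_k(z,w) = z^{2^k-1}w^{2^k}$ on the domain $|z| \leq 1/2$, $|w| \leq 1$, I would substitute this expression into the recursion and use $G(z,w) = zw^2$ to obtain
$$G_{k+1}(z,w) = G\bigl(z, z^{2^k-1}w^{2^k}\bigr) = z\bigl(z^{2^k-1}w^{2^k}\bigr)^2 = z^{1 + 2(2^k-1)}w^{2^{k+1}}.$$
The only arithmetic to check is the exponent of $z$, namely $1 + 2(2^k - 1) = 2^{k+1} - 1$, after which the formula for $G_{k+1}$ follows and the induction closes. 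This mirrors the inductive derivation already used for the plane-tree iterate in \eqref{eq: formula_plane_iterate}.

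There is essentially no hard part here; the only point requiring care is the legitimacy of the iteration, i.e.\ that each $G_k(z,w)$ remains in the region where $G$ is defined so that the next composition makes sense. This is guaranteed by the bound \eqref{eq:Glesstau}: for $|z| \leq \rho = 1/2$ and $|w| \leq \tau = 1$ one has $|G(z,w)| = |z|\,|w|^2 \leq 1/2 \leq 1$, so the constraints $|z| \leq 1/2$, $|w| \leq 1$ are preserved under each application of $G$ in the second variable. Hence the inductive identity holds on the full stated domain.
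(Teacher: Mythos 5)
Your proof is correct and matches the paper's intent: the paper states this lemma without proof, treating it as the routine induction you carry out, and your base case, inductive step (with the exponent check $1+2(2^k-1)=2^{k+1}-1$), and verification via \eqref{eq:Glesstau} that the iteration stays in the domain $|z|\leq 1/2$, $|w|\leq 1$ are all accurate.
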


Using the previous Lemma we conclude that the partial derivatives of $G_k(z,w)$ are given by
\begin{align}\label{partial-binary}
	\frac{\partial G_{k}}{\partial z}(z,w) = (2^{k}-1)z^{2^{k}-1}w^{2^{k}}, \quad \frac{\partial G_{k}}{\partial w}(z,w) = 2^{k}z^{2^{k}-1}w^{2^{k}-1}.
\end{align}

In this particular case we also have an explicit formula for the generating function of the rooted binary plane trees with $\partial_{root} \geq k$.

\begin{coro} For $k \geq 0$ we have
	\begin{align}\label{generating-k-binary}
		B_k(z) = G_k(z,B(z)) = z^{2^{k}-1}B(z)^{2^{k}}, \quad \text{ for all } \text{\vspace{.1 cm} } |z| \leq 1/2.
	\end{align}
\end{coro}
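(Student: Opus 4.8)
The plan is to obtain this corollary as an immediate consequence of the closed form for $G_k(z,w)$ established in the preceding Lemma, combined with the defining relation $B_k(z)=G_k(z,B(z))$ that comes from the iteration scheme \eqref{eq:iteration_g_k}. Indeed, that Lemma asserts $G_k(z,w)=z^{2^k-1}w^{2^k}$ on the bidisk $\{|z|\le 1/2,\ |w|\le 1\}$, so the entire task reduces to substituting $w=B(z)$ and checking that this substitution is legitimate on the disk $\{|z|\le 1/2\}$.

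First I would record that for $\psi(z)=1+z^2$ the apex is $\tau=1$, whence $\rho=\tau/\psi(\tau)=1/2$; this matches the radius $1/2$ appearing in the statement. Next I would use that $B(z)=g(z)$ is the solution of Lagrange's equation with data $\psi$, so by the general theory of Section \ref{sec:Lagrange} the function $g$ extends continuously to the closed disk $\{|z|\le \rho\}$ and satisfies $g([0,\rho])=[0,\tau]=[0,1]$. Since $B$ has nonnegative coefficients, the maximum principle (or the trivial coefficientwise bound) gives $|B(z)|\le B(|z|)\le B(\rho)=\tau=1$ for all $z$ with $|z|\le 1/2$; thus the point $(z,B(z))$ lies in the bidisk on which the closed form for $G_k$ is valid.

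With this verification in hand, the conclusion is a one-line substitution: for all $|z|\le 1/2$ one has $B_k(z)=G_k(z,B(z))=z^{2^k-1}B(z)^{2^k}$, which also covers the base case $k=0$ where both sides reduce to $B(z)$. The computation presents no genuine obstacle; the only point requiring (minor) care is confirming that $B(z)$ remains inside $\{|w|\le 1\}$ throughout $\{|z|\le 1/2\}$, and this is immediate from the mapping property of $g$ recorded above.
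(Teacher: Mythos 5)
Your proposal is correct and matches the paper's (implicit) argument: the corollary is stated as an immediate consequence of the preceding lemma $G_k(z,w)=z^{2^k-1}w^{2^k}$ together with the defining relation $B_k(z)=G_k(z,B(z))$, and your verification that $\tau=1$, $\rho=1/2$, and $|B(z)|\le B(\rho)=\tau=1$ on the closed disk is exactly the (routine) check the paper leaves to the reader.
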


Using this representation, we can give an exact formula for the coefficients of $B_k(z)$. Later on, using this concrete formula, we give an asymptotic formula for the coefficients $B_n^{(k)}$. 

For any $n = 2m+1$, for $m$ large enough, we have
\begin{align}\label{eq: asymptotic A_n^{(k)}-binaryplane}
	\begin{split}
		\textsc{coeff}_{[2m+1]}\left[B_k(z)\right] &= \textsc{coeff}_{[2m-2^k+2]}\left[B(z)^{2^k}\right] \\ &= \frac{2^k}{2m-2^k+2}\textsc{coeff}_{[2m-2^{k+2}+2]}\left[(1+z^2)^{2m-2^k+2}\right] \\
		&= \frac{2^k}{2m-2^k+2}{2m-2^k+2 \choose m-2^{k}+1} \sim 2^{k-2^{k}+1} \frac{4^{m}}{\sqrt{\pi}m^{3/2}}, \quad \text{ as } m \rightarrow \infty.
	\end{split}
\end{align}

The asymptotic formula (\ref{eq:formula OtterMeinMoon}) gives that
\begin{align}\label{eq: asymptotic A_n-binaryplane}
	\textsc{coeff}_{[2m+1]}\left[B(z)\right] = C_{m} = \frac{1}{m}{2m \choose m} \sim \frac{4^{m}}{\sqrt{\pi}m^{3/2}}, \quad \text{ as } m \rightarrow \infty.
\end{align}

The following Theorem follows combining formulas (\ref{eq: asymptotic A_n^{(k)}-binaryplane}) and (\ref{eq: asymptotic A_n-binaryplane}). This result is also a direct consequence of Theorem \ref{thm: mainthmprobabilistic}.

\begin{theor} For any integer $k \geq 0$ and $n \geq 1$ odd we have
	\begin{align*}
		\lim_{\substack{n \to \infty; \\ n\in \mathbb{N}_{1,2}}}\frac{\, \, \, \, B_n^{(k)}}{B_n} = \frac{\partial G_k}{\partial w}(1/2,1) = 2^{k-2^k+1}\,.
	\end{align*}
\end{theor}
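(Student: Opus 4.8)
The plan is to give the two routes indicated in the text and to check that they agree. The quickest route is purely computational, dividing the two coefficient asymptotics already derived. Recall that for binary plane trees $\psi(z)=1+z^2$, so $Q_\psi=2$; the trees in $\mathcal{G}^{(k)}$ always have an odd number of nodes, and both $B_n$ and $B_n^{(k)}$ vanish for $n$ even, which is exactly why the limit is taken along $n\in\mathbb{N}_{1,2}$. Writing $n=2m+1$, formula (\ref{eq: asymptotic A_n^{(k)}-binaryplane}) gives $B_{2m+1}^{(k)}\sim 2^{k-2^k+1}\,4^m/(\sqrt{\pi}\,m^{3/2})$, while formula (\ref{eq: asymptotic A_n-binaryplane}) gives $B_{2m+1}=C_m\sim 4^m/(\sqrt{\pi}\,m^{3/2})$. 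Dividing, the common factor $4^m/(\sqrt{\pi}\,m^{3/2})$ cancels and the ratio tends to $2^{k-2^k+1}$ as $m\to\infty$, which is the claimed limit.

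For the second, structural, route I would appeal directly to Theorem \ref{thm: mainthmprobabilistic}. First I must check that $\psi(z)=1+z^2$ lies in $\K^\star$: it has nonnegative coefficients, $\psi(0)=1>0$, infinite radius of convergence, and mean function $m_\psi(t)=2t^2/(1+t^2)$, which tends to $2>1$ as $t\uparrow\infty$, so the apex exists and solves $2t^2/(1+t^2)=1$, namely $\tau=1$. Consequently $\rho=\tau/\psi(\tau)=1/2$ and $Q_\psi=\gcd\{2\}=2$. Theorem \ref{thm: mainthmprobabilistic} then yields
\[
\lim_{\substack{n\to\infty;\\ n\in\mathbb{N}_{1,2}}}\frac{B_n^{(k)}}{B_n}=\frac{\partial G_k}{\partial w}(\rho,\tau)=\frac{\partial G_k}{\partial w}(1/2,1).
\]

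Finally I would evaluate this partial derivative using the closed form already obtained for the iterate. From the formula $\partial G_k/\partial w(z,w)=2^k z^{2^k-1}w^{2^k-1}$ in (\ref{partial-binary}), substituting $(z,w)=(1/2,1)$ gives $2^k(1/2)^{2^k-1}\cdot 1=2^{k-2^k+1}$, which matches the right-hand side of the theorem. I do not expect a genuine obstacle here: the entire argument reduces to verifying the membership $\psi\in\K^\star$ together with the parameters $\tau=1$, $\rho=1/2$, $Q_\psi=2$, and then substituting into results already established. The only point requiring a little care is that, because $Q_\psi=2$, every statement and the limit itself must be taken along the odd residue class $\mathbb{N}_{1,2}$, consistent with the vanishing of $B_n$ and $B_n^{(k)}$ at even $n$.
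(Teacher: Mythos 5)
Your proposal is correct and follows essentially the same path as the paper, which states that the theorem "follows combining formulas (\ref{eq: asymptotic A_n^{(k)}-binaryplane}) and (\ref{eq: asymptotic A_n-binaryplane})" and "is also a direct consequence of Theorem \ref{thm: mainthmprobabilistic}" --- precisely the two routes you carry out. Your verification of $\psi=1+z^2\in\K^\star$ with $\tau=1$, $\rho=1/2$, $Q_\psi=2$ and the evaluation $\frac{\partial G_k}{\partial w}(1/2,1)=2^{k-2^k+1}$ via (\ref{partial-binary}) are all accurate.
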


\section{Mean number of nodes with $\partial_v \geq k$ in a Cayley tree}\label{sec:non-rooted}

For the family of unrooted labeled Cayley trees, and following a circle of ideas appearing in \cite{Ara-Fer}, we study the mean number of nodes which are at distance to the border bigger or equal than $k$, as the number of nodes $n \rightarrow \infty$.

Recapitulating, for a given tree $T$, and for any integer $k \geq 0$, a node $v \in \text{nodes}(T)$ is at distance to the border bigger or equal than $k$ if 
$$\partial_{v} \triangleq \min_{u \in \text{leaves}(T)}d(u,v) \geq k.$$

We denote by $\mathcal{U}_n$ the class of unrooted labeled Cayley trees. The nodes of each of the trees in this family can be distinguished by its labels, therefore, for each tree in $\mathcal{U}_n$ there are $n$ rooted labeled Cayley trees. In particular, the number of unrooted labeled Cayley trees is given by the sequence $U_n = n^{n-2}$. 

Let $X_{n,k}$ be the random variable that, for any tree $T \in \mathcal{U}_n$, counts the number of nodes $v$ with distance to the border $\partial_{v} \geq  k$, then we have
\begin{align*}
	X_{n,k}(T) = \sum_{u \in \text{nodes}(T)}\textbf{1}_{\{\partial_{u} \geq k\}}.
\end{align*}

Following \cite{Ara-Fer}, we consider a 0-1 matrix $M$ of size $n \times n^{n-2}$. We label the columns of this matrix with the collection of trees in $\mathcal{U}_n = \{t_1,t_2,\dots\}$, and then the rows with the labels of each node, that is, $\{1,2,\dots,n\}$. We write $1$ in the $(j,t_j)$ entry if the node $j$ is at distance to the border bigger or equal than $k$ and 0 otherwise.

By adding all the entries of $M$ and then dividing by $n^{n-1}$ we obtain the mean value of the random variable $X_{n,k}$, that is,
\begin{align*}
	\mathbb{E}_n(X_{n,k}) = \frac{1}{n^{n-1}}\sum_{T \in \, \mathcal{U}_n}X_{n,k}(T).
\end{align*}

For any integer $k \geq 0$, and using the notation
\begin{align*}
	c_k = \lim_{n \rightarrow \infty}t_n^{(k)}/t_n\,,
\end{align*}
we obtain the following result.

\begin{theor}\label{thm:labeled_meannumber} As $n \rightarrow \infty$, the expectation of the proportion of nodes in a labeled Cayley tree with $n$ nodes that are at distance bigger or equal than $k$ from any leaf tends to $c_k$, that is, 
	\begin{align*}
		\lim_{n \rightarrow \infty}\frac{1}{n}\mathbb{E}_n(X_{n,k}) = c_k\,.
	\end{align*}
	Equivalently,
	\begin{align*}
		\mathbb{E}_n(X_{n,k}) \sim c_k \cdot n, \quad \text{ as } n \rightarrow \infty\,. 
	\end{align*}
\end{theor}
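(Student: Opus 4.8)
The plan is to prove the statement by a double-counting argument that converts the sum of all entries of the matrix $M$ into the counting sequence $t_n^{(k)}$ of rooted labeled Cayley trees with $\partial \geq k$, and then to invoke the Cayley instance of Theorem \ref{thm: mainthmprobabilistic}. First I would record that the sum of all the entries of $M$ is
$$\sum_{T \in \mathcal{U}_n} X_{n,k}(T) = \#\{(T,v) : T \in \mathcal{U}_n,\ v \in \text{nodes}(T),\ \partial_v \geq k\},$$
the number of pairs (unrooted tree, distinguished node) in which the distinguished node is at distance to the border at least $k$.

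The key step is a bijection between such pairs $(T,v)$ and the rooted labeled Cayley trees on $\{1,\dots,n\}$: rooting the unrooted tree $T$ at the node $v$ yields a rooted labeled Cayley tree, and as $v$ ranges over the nodes of $T$ and $T$ over $\mathcal{U}_n$ this gives a bijection onto $\mathcal{T}_n$, which has $n \cdot n^{n-2} = n^{n-1} = t_n$ elements. Under this correspondence I would verify that the distinguished node $v$ is at distance to the border at least $k$, measured in the tree rooted at $v$ so as to match the rooted theory, precisely when the associated rooted tree has $\partial \geq k$; consequently the sum of the entries of $M$ equals $t_n^{(k)}$.

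Dividing by the number $n^{n-2}$ of unrooted labeled Cayley trees then gives $\mathbb{E}_n(X_{n,k}) = t_n^{(k)}/n^{n-2}$, whence $\tfrac{1}{n}\mathbb{E}_n(X_{n,k}) = t_n^{(k)}/n^{n-1} = t_n^{(k)}/t_n$. Finally I would appeal to the Cayley case of Theorem \ref{thm: mainthmprobabilistic}, which gives $\lim_{n \to \infty} t_n^{(k)}/t_n = \tfrac{\partial G_k}{\partial w}(1/e,1) = c_k$ (note that $Q_\psi = 1$ for $\psi(z) = e^z$, so the limit runs over all $n$), and the claim follows; the equivalent asymptotic $\mathbb{E}_n(X_{n,k}) \sim c_k\,n$ is then immediate.

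I expect the main obstacle to be the careful reconciliation of the two notions of distance to the border under rooting. The quantity $\partial_v$ is defined intrinsically on the unrooted tree as $\min_{u \in \text{leaves}(T)} d(u,v)$, whereas $t_n^{(k)}$ enumerates rooted trees by the distance from the root to the nearest node of outdegree zero; these two agree for every non-leaf $v$, but for a node $v$ that is a leaf of the unrooted tree they differ, since rooting at $v$ turns $v$ into a non-leaf root. Settling this boundary case correctly, so that each rooted Cayley tree with $\partial \geq k$ is accounted for exactly once by the matrix $M$, is the delicate point; once it is resolved, the remaining ingredients are only the elementary counts $t_n = n^{n-1}$ and $n^{n-2}$ for unrooted trees, together with the already-established value of $c_k$.
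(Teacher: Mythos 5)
The paper itself offers no proof of this theorem beyond setting up the matrix $M$, so your double-counting route --- identifying $\sum_{T\in\mathcal{U}_n} X_{n,k}(T)$ with a count of rooted labeled trees via the bijection $(T,v)\mapsto(T \text{ rooted at } v)$, and then invoking the Cayley case $t_n^{(k)}/t_n\to c_k$ of Theorem \ref{thm: mainthmprobabilistic} --- is exactly the intended argument, and your bookkeeping ($t_n=n^{n-1}$, $|\mathcal{U}_n|=n^{n-2}$, $Q_\psi=1$ so the limit runs over all $n$) is correct.

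However, the ``delicate point'' you flag and postpone is a genuine gap, and it is not a removable technicality: it has to be settled by a choice of convention, and under the literal definition it goes the wrong way. With $\partial_v=\min_{u\in\text{leaves}(T)}d(u,v)$ taken at face value, a degree-one node $v$ has $\partial_v=0$, so for $k\ge 1$ the pairs $(T,v)$ counted by $M$ correspond to rooted trees with $\partial\ge k$ \emph{whose root has outdegree at least two}, not to all of $\mathcal{T}_n^{(k)}$. The excluded trees (root of outdegree exactly $1$ and $\partial\ge k$) have exponential generating function $z\,g_{k-1}(z)$, hence number $n!\,A_{n-1}^{(k-1)}\sim c_{k-1}\,e^{-1}\,t_n$, a positive fraction of all rooted trees; for instance, for $k=1$ the matrix counts the non-leaf nodes, whose expected proportion tends to $1-e^{-1}$, whereas $c_1=1$. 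So the identity ``sum of the entries of $M$ equals $t_n^{(k)}$'' fails by $\Theta(t_n)$, and the limit your argument would actually produce is $c_k-e^{-1}c_{k-1}$. To obtain the stated theorem you must adopt, and state explicitly, the convention that for a leaf $v$ the quantity $\partial_v$ is the distance to the nearest \emph{other} leaf, i.e.\ $\partial_v:=\partial(T \text{ rooted at } v)$; with that convention the bijection preserves the statistic exactly and your proof closes. As written, the proposal leaves open precisely the step on which the identity --- and, under one reading of the definition, the theorem itself --- hinges.
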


This same result is true for any subclass of rooted labeled Cayley trees with exponential generating function given by Lagrange's equation with data $\psi(z) = \sum_{j \geq 0}(b_j/j!)z^j \in \K^{\star}$, where $b_j \in \{0,1\}$, for all $j \in \{0,1,\dots\}$, and total size $n \in \mathbb{N}_{1,Q_{\psi}}$.

Finally, for the unrooted labeled unary Cayley trees, that is, the family of trees given by $\psi(z) = 1+z \in \K$, we have
\begin{align*}
	\mathbb{E}_n(X_{n,k}) \sim n, \quad \text{ as } n \rightarrow \infty\,.
\end{align*}

Some particular cases of Theorem \ref{thm:labeled_meannumber} appear in \cite{Ara-Fer}, there the authors prove the previous result for nodes $v$ with $\partial_{v} \geq 3$, see Theorem 3.7 in \cite[p. 314]{Ara-Fer}.

\end{document}